\definecolor{lightgray}{rgb}{0.8, 0.8, 0.8}
\definecolor{darkgray}{rgb}{0.7, 0.7, 0.7}
\newcounter{todocounter}
\newcommand{\minisec}[1]{\noindent{\sc #1.}}
\theoremstyle{plain}
\newtheorem{theorem}{Theorem}[section]
\newtheorem{proposition}[theorem]{Proposition}
\newtheorem{corollary}[theorem]{Corollary}
\newtheorem{conjecture}[theorem]{Conjecture}
\newtheorem*{higmans-lemma*}{Higman's Lemma~\cite{higman:ordering-by-div:}}
\newfont{\footsc}{cmcsc10 at 8truept}
\newfont{\footbf}{cmbx10 at 8truept}
\newfont{\footrm}{cmr10 at 10truept}
\renewenvironment{abstract}{
	\begin{list}{}%
	{\setlength{\rightmargin}{1in}%
	\setlength{\leftmargin}{1in}}%
	\item[]\ignorespaces\begin{small}}%
	{\end{small}\unskip\end{list}%
}
\newcommand{\C}{\mathcal{C}}
\newcommand{\st}{\::\:}
\newcommand\mybullet{\raisebox{-5pt}{\normalsize \ensuremath{\bullet}}}
\newcommand\mycirc{\raisebox{-5pt}{\normalsize \ensuremath{\circ}}}
\def\absdot{\@ifnextchar[{\@absdotlabel}{\@absdotnolabel}}
	\def\@absdotlabel[#1]#2{%
		\node at #2 {\normalsize \mybullet};
		\node at #2 [below=2pt] {\ensuremath{#1}};
	}
	\def\@absdotnolabel#1{%
		\node at #1 {\normalsize \mybullet};
	}
\def\absdothollow{\@ifnextchar[{\@absdothollowlabel}{\@absdothollownolabel}}
	\def\@absdothollowlabel[#1]#2{%
		\node at #2 {\normalsize \textcolor{white}{\mybullet}};
		\node at #2 {\normalsize \mycirc};
		\node at #2 [below=2pt] {\ensuremath{#1}};
	}
	\def\@absdothollownolabel#1{%
		\node at #1 {\normalsize \textcolor{white}{\mybullet}};
		\node at #1 {\normalsize \mycirc};
	}
\newcommand{\plotperm}[1]{
	\foreach \j [count=\i] in {#1} {
		\absdot{(\i,\j)};
	};
}
\newcommand{\plotpartialperm}[1]{
	\foreach \i/\j in {#1} {
		\absdot{(\i,\j)};
	};
}
\newcommand{\plotpermbox}[4]{
	\draw [darkgray, thick, line cap=round]
		({#1-0.5}, {#2-0.5}) rectangle ({#3+0.5}, {#4+0.5});
}
\newcommand{\plotpermgraph}[1]{
	\foreach \j [count=\i] in {#1} {
		\foreach \b [count=\a] in {#1} {
			\ifthenelse{\a<\i \AND \b>\j}{\draw (\a,\b)--(\i,\j);}{}
		};
	};
	\plotperm{#1};
}
\newcommand{\plotpermdyckpath}[1]{
	\draw[ultra thick, line cap=round] (0.5,0.5)
	\foreach \step in {#1} {
		\ifnum\step=1
			-- ++(0,1)
		\else
			-- ++(1,0)
		\fi
	};
}
\newcommand{\plotdyckpath}[1]{
	\draw[ultra thick, line cap=round] (0.5,0)
	\foreach \step in {#1} {
		\ifnum\step=1
			-- ++(1,1)
		\else
			-- ++(1,-1)
		\fi
	};
}
\newcommand{\arcskinnyplain}[2]{
	\draw[thick] (#1,0) arc (180:0:{(#2-#1)/2});
}
\newcommand{\matchsmall}[1]{
	\begin{tikzpicture}[scale=.1, anchor=base]
		\def\h{0};
		\def\maxh{0};
		\foreach \i/\j in {#1} {
			\pgfmathparse{\j-\i};
			\let\h\pgfmathresult;
			\pgfmathifthenelse{\h>\maxh}{\h}{\maxh};
			\global\let\maxh\pgfmathresult;
		};
		\pgftransformyscale{{4.5/\maxh}};
		\foreach \i/\j in {#1} {
			\arcskinnyplain{\i}{\j};
		};
	\end{tikzpicture}
}
\newcommand{\matchpermsmall}[1]{
	\begin{tikzpicture}[scale=.1, anchor=base]
		\foreach \j [count=\n] in {#1} {};
		\def\h{0};
		\def\maxh{0};
		\foreach \j [count=\i] in {#1} {
			\pgfmathparse{2*\n+1-\j-\i};
			\let\h\pgfmathresult;
			\pgfmathifthenelse{\h>\maxh}{\h}{\maxh};
			\global\let\maxh\pgfmathresult;
		};
		\pgftransformyscale{{4.5/\maxh}};
		\foreach \j [count=\i] in {#1} {
			\arcskinnyplain{\i}{{2*\n+1-\j}};
		};
	\end{tikzpicture}
}
\newcommand{\plotpinsequence}[1]{
	\absdot{(0,0)}{};
	\edef\n{0}
	\edef\s{0}
	\edef\e{0}
	\edef\w{0}
	\edef\x{0}
	\edef\y{0}
	\foreach \pin [remember=\pin as \oldpin (initially 1), count=\i] in {#1} {
		\ifthenelse{\pin=1 \OR \pin=2}{
			\ifthenelse{\oldpin=3}{
				\xdef\x{\number\numexpr\e-1}
			}{
				\xdef\x{\number\numexpr\w+1}
			}
			\ifnum\i=1 
				\pgfmathparse{\e+1}
 				\xdef\e{\pgfmathresult}
			\fi	
		}{ 
			\ifthenelse{\oldpin=1}{
				\xdef\y{\number\numexpr\n-1}
			}{
				\xdef\y{\number\numexpr\s+1}
			}
			\ifnum\i=1 
				\pgfmathparse{\s-1}
 				\xdef\s{\pgfmathresult}
			\fi	
		}
		\ifnum\pin=1 
			\pgfmathparse{\n+2}
 			\xdef\n{\pgfmathresult}		
			\absdot{(\x,\n)}{};
			\ifnum\i>1
				\draw (\x,\n) -- (\x,\y-0.5);
			\else
			\fi
		\fi
		\ifnum\pin=2 
			\pgfmathparse{\s-2}
 			\xdef\s{\pgfmathresult}
			\absdot{(\x,\s)}{};
			\ifnum\i>1
				\draw (\x,\s) -- (\x,\y+0.5);
			\else
			\fi
		\fi
		\ifnum\pin=3 
			\pgfmathparse{\e+2}
 			\xdef\e{\pgfmathresult}
			\absdot{(\e,\y)}{};
			\ifnum\i>1
				\draw (\e,\y) -- (\x-0.5,\y);
			\else
			\fi
		\fi
		\ifnum\pin=4 
			\pgfmathparse{\w-2}
 			\xdef\w{\pgfmathresult}
			\absdot{(\w,\y)}{};
			\ifnum\i>1
				\draw (\w,\y) -- (\x+0.5,\y);
			\else
			\fi
		\fi		
	};
}
\title{\sc A Counterexample Regarding Labelled Well-Quasi-Ordering}
\author{\centering
\begin{tabular}{ccc}
Robert Brignall
&\rule{0pt}{0pt}&
Michael Engen\quad and\quad Vincent Vatter\footnote{Vatter's research was sponsored by the National Security Agency under Grant Number H98230-16-1-0324. The United States Government is authorized to reproduce and distribute reprints not-withstanding any copyright notation herein.}\\[-0.25ex]
\small School of Mathematics and Statistics
&&
\small Department of Mathematics\\[-0.5ex]
\small The Open University
&&
\small University of Florida\\[-0.5ex]
\small Milton Keynes, England UK
&&
\small Gainesville, Florida USA\\[-1.5ex]
\end{tabular}
}
\titleformat{\section}{\large\sc}{\thesection.}{1em}{}
\date{}
\begin{document}
\maketitle

\pagestyle{main}

\begin{abstract}
Korpelainen, Lozin, and Razgon conjectured that a hereditary property of graphs which is well-quasi-ordered by the induced subgraph order and defined by only finitely many minimal forbidden induced subgraphs is labelled well-quasi-ordered, a notion stronger than that of $n$-well-quasi-order introduced by Pouzet in the 1970s. We present a counterexample to this conjecture. In fact, we exhibit a hereditary property of graphs which is well-quasi-ordered by the induced subgraph order and defined by finitely many minimal forbidden induced subgraphs yet is not $2$-well-quasi-ordered. This counterexample is based on the widdershins spiral, which has received some study in the area of permutation patterns.
\end{abstract}

\section{Well-Quasi-Order and Strengthenings}

A \emph{hereditary property}, or (throughout this paper) \emph{class} of graphs is a set of finite graphs that is closed under isomorphism and closed downward under the induced subgraph ordering. Thus if $\C$ is a class, $G\in\C$, and $H$ is an induced subgraph of $G$, then $H\in\C$. Many natural sets of graphs form classes, such as the perfect graphs or permutation graphs. For an extensive survey we refer to the encyclopedic text of Brandst\"adt, Le, and Spinrad~\cite{brandstadt:graph-classes:-:}. A common way to describe a graph class is via its set of \emph{minimal forbidden induced subgraphs}, that is, the minimal (under the induced subgraph order) graphs which do not lie in the class. The set of minimal forbidden induced subgraphs necessarily forms an \emph{antichain}, meaning that none of them is contained in another.

We are interested here in well-quasi-ordering. A \emph{quasi-order} (a reflexive and transitive but not-necessarily-irreflexive binary relation) is a \emph{well-quasi-order} (\emph{wqo}), or is \emph{well-quasi-ordered} (also \emph{wqo}), if it does not contain an infinite antichain or an infinite strictly decreasing chain. As the set of finite graphs under the induced subgraph relation cannot contain an infinite strictly decreasing chain, in this case well-quasi-order is synonymous with the absence of infinite antichains.

Well-quasi-ordering has been studied for a wide variety of combinatorial objects, under many different orders; see Huczynska and Ru\v{s}kuc~\cite{huczynska:well-quasi-orde:} for a recent survey. Thus while the celebrated Minor Theorem of Robertson and Seymour~\cite{robertson:graph-minors-i-xx:} shows that the family of all graphs is wqo under the minor order, one might instead ask about the induced subgraph order, and in this context the set of graphs is clearly not wqo. For example, the set of chordless cycles forms an infinite antichain. Another infinite antichain is the set of \emph{double-ended forks}, examples of which are shown in Figure~\ref{fig-double-ended-forks}.

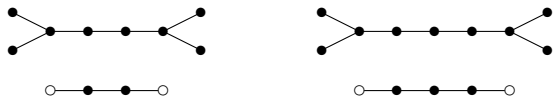
\begin{figure}
\begin{center}
	\begin{tabular}{ccc}
	\begin{tikzpicture}[scale=0.5]
		\plotpartialperm{-1/-0.5,-1/0.5,0/0,1/0,2/0,3/0,4/-0.5,4/0.5};
		\draw (-1,-0.5)--(0,0)--(-1,0.5);
		\draw (4,-0.5)--(3,0)--(4,0.5);
		\draw (0,0)--(3,0);
	\end{tikzpicture}
	&\quad\quad&
	\begin{tikzpicture}[scale=0.5]
		\plotpartialperm{-1/-0.5,-1/0.5,0/0,1/0,2/0,3/0,4/0,5/-0.5,5/0.5};
		\draw (-1,-0.5)--(0,0)--(-1,0.5);
		\draw (5,-0.5)--(4,0)--(5,0.5);
		\draw (0,0)--(4,0);
	\end{tikzpicture}
	\\
	\begin{tikzpicture}[scale=0.5]
		\draw (0,0)--(3,0);
		\absdothollow{(0,0)};
		\absdothollow{(3,0)};
		\plotpartialperm{1/0,2/0};
	\end{tikzpicture}
	&\quad\quad&
	\begin{tikzpicture}[scale=0.5]
		\draw (0,0)--(4,0);
		\absdothollow{(0,0)};
		\absdothollow{(4,0)};
		\plotpartialperm{1/0,2/0,3/0};
	\end{tikzpicture}
\end{tabular}
\end{center}
\caption{The double-ended forks form an infinite antichain in the induced subgraph order; two members of this family are shown on the top row. The bottom row demonstrates that the set of paths labelled by a two-element antichain form an infinite antichain in the labelled induced subgraph order (instead of labelling the vertices, in this figure we have colored them black and white).}
\label{fig-double-ended-forks}
\end{figure}

However, many important graph classes are nonetheless known to be wqo. For example, the class of \emph{co-graphs} (those which do not contain an induced copy of $P_4$) is wqo~\cite{damaschke:induced-subgrap:}. In fact, this class satisfies an even stronger property. Let $(X,\le)$ be any quasi-order. A \emph{labelling} of the graph $G$ is a function $\ell$ from the vertices of $G$ to $X$, and the pair $(G,\ell)$ is called a \emph{labelled graph}. A labelled graph $(H,k)$ is a \emph{labelled induced subgraph} of $(G,\ell)$ if $H$ is isomorphic to an induced subgraph of $G$ and the isomorphism maps each vertex $v\in H$ to a vertex $w\in G$ such that $k(v)\le \ell(w)$ in $(X,\le)$. The class $\C$ of (unlabelled) graphs is \emph{labelled well-quasi-ordered (lwqo)} if for every wqo $(X,\le)$ the set of all graphs in $\C$ labelled by $(X,\le)$ is wqo by the labelled induced subgraph order. This is equivalent to saying that the set of graphs in $\C$ labelled by $(X,\le)$ does not contain an infinite antichain. It follows from the work of Atminas and Lozin~\cite{atminas:labelled-induce:} that the class of co-graphs is lwqo.

The lwqo property is much stronger than wqo. For one example, the set of all chordless paths, which is trivially wqo, is not lwqo, as indicated in Figure~\ref{fig-double-ended-forks}. This shows that the class of \emph{linear forests}, that is, the class of all graphs whose connected components are paths, is not lwqo, despite being wqo. As further evidence of the strength of lwqo, we present the following result and its short proof.

\begin{proposition}[Pouzet~\cite{pouzet:un-bel-ordre-da:}; see also Daligault, Rao, Thomass{\'e}~\cite{daligault:well-quasi-orde:}]
\label{prop-lwqo-implies-finite-basis}
Every lwqo class of graphs is defined by finitely many minimal forbidden induced subgraphs.
\end{proposition}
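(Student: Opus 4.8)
The plan is to prove the contrapositive: if $\C$ has infinitely many minimal forbidden induced subgraphs, then $\C$ is not lwqo. Let $M_1,M_2,\dots$ be an infinite list of distinct minimal forbidden induced subgraphs of $\C$; recall that these necessarily form an antichain. The guiding idea is to convert this antichain of \emph{non-members} into an infinite antichain of labelled \emph{members} of $\C$, exploiting the fact that deleting a single vertex from a \emph{minimal} forbidden subgraph must land back inside $\C$, together with a labelling that remembers the adjacencies of the deleted vertex.

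Concretely, I would take the labelling poset $(X,\le)$ to be a two-element antichain $\{a,b\}$, which is trivially wqo. For each $i$, fix a vertex $v_i$ of $M_i$ and set $G_i=M_i-v_i$; since $M_i$ is minimal forbidden, $G_i$ is a proper induced subgraph of $M_i$ and hence lies in $\C$. Define a labelling $\ell_i$ of $G_i$ by assigning the label $a$ to the neighbours of $v_i$ and the label $b$ to the non-neighbours of $v_i$. In this way $\ell_i$ encodes exactly the adjacency pattern of the deleted vertex, so that $M_i$ can be recovered from $(G_i,\ell_i)$ by reintroducing a vertex joined precisely to the $a$-labelled vertices.

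The heart of the argument is to show that $\{(G_i,\ell_i)\}$ is an antichain under the labelled induced subgraph order. Suppose $(G_i,\ell_i)\le(G_j,\ell_j)$ for some $i\ne j$, witnessed by an embedding $\phi$. Because $\{a,b\}$ is an antichain, the label condition $\ell_i(x)\le\ell_j(\phi(x))$ forces $\ell_i(x)=\ell_j(\phi(x))$ for every vertex $x$; that is, $\phi$ sends neighbours of $v_i$ to neighbours of $v_j$ and non-neighbours to non-neighbours. I would then extend $\phi$ to a map $\hat\phi\colon M_i\to M_j$ by setting $\hat\phi(v_i)=v_j$, and verify that $\hat\phi$ is an induced subgraph embedding: adjacencies among the old vertices are preserved because $\phi$ is induced, while for $x\in G_i$ we have $x\sim v_i$ in $M_i$ iff $\ell_i(x)=a$ iff $\ell_j(\phi(x))=a$ iff $\phi(x)\sim v_j$ in $M_j$. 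Thus $M_i$ would be an induced subgraph of $M_j$, contradicting that the $M_i$ form an antichain. Hence no such embedding exists, so $\{(G_i,\ell_i)\}$ is an infinite antichain of $\C$-graphs labelled by the wqo $\{a,b\}$, which shows $\C$ is not lwqo (indeed not even $2$-wqo).

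The only point needing a brief check is that $G_i$ is nonempty so that the labelling carries information, and this is automatic in the infinite-basis case: if $K_1\notin\C$ then $\C$ consists of only the empty graph and its unique minimal forbidden subgraph is $K_1$, contradicting infiniteness; hence $K_1\in\C$, no $M_i$ equals $K_1$, and every $M_i$ has at least two vertices. The substantive step is verifying that the vertex-reinsertion map $\hat\phi$ is genuinely an induced embedding, which is where the two-element antichain labelling does its work — everything else is bookkeeping.
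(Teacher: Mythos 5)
Your proof is correct and follows essentially the same route as the paper: label the one-vertex-deleted minimal forbidden subgraphs by a two-element antichain recording adjacency to the deleted vertex, and observe that a labelled embedding would lift to an embedding of the forbidden graphs themselves. You simply phrase it contrapositively and spell out the reinsertion argument that the paper leaves implicit; both are fine.
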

\begin{proof}
Suppose that the graph class $\C$ is lwqo but is defined by an infinite set, say $B$, of minimal forbidden induced subgraphs. We take our wqo set of labels to be the $2$-element antichain consisting of the colors black and white. For each graph $G\in B$ choose an arbitrary vertex $v$ of $G$, so $G-v\in\C$. We now label/color the vertices of $G-v$ white if they are adjacent to $v$ in $G$ and black otherwise to obtain a labelled graph $G'$. Let $B'$ denote the set of all labelled graphs formed in this manner. As $B$ is an infinite antichain, it follows that $B'$ is an infinite antichain of labelled graphs from $\C$, but this contradicts our hypothesis that $\C$ is lwqo.
\end{proof}
Thus an lwqo graph class is necessarily wqo and defined by finitely many minimal forbidden induced subgraphs. Korpelainen, Lozin, and Razgon~\cite{korpelainen:boundary-proper:} conjectured that the converse also holds, a conjecture that was later repeated by Atminas and Lozin~\cite{atminas:labelled-induce:} in their work on lwqo.

\begin{conjecture}[Korpelainen, Lozin, and Razgon~\cite{korpelainen:boundary-proper:}]
\label{false-conj-lwqo}
A class of graphs which is wqo by the induced subgraph relation is lwqo if and only if it is defined by finitely many minimal forbidden induced subgraphs.
\end{conjecture}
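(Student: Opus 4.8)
Because the abstract promises a counterexample, the task here is really to \emph{refute} this conjecture rather than to establish it. One implication is already in hand: Proposition~\ref{prop-lwqo-implies-finite-basis} shows that an lwqo class must be finitely based, so the forward direction of the biconditional holds. What must fail is the reverse implication, and I would attack it by producing a single graph class $\C$ that is simultaneously (i) wqo under the induced subgraph order, (ii) defined by finitely many minimal forbidden induced subgraphs, and yet (iii) not lwqo. Guided by the abstract, I would seek the sharpest possible failure of (iii), demonstrating that $\C$ is not even $2$-wqo: labelling the vertices with the two-element antichain of colours $\{\text{black},\text{white}\}$ already yields an infinite antichain, exactly as the labelled paths of Figure~\ref{fig-double-ended-forks} do for the (non-finitely-based) class of linear forests.

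The plan is to construct $\C$ from the widdershins spiral. I would fix the infinite spiral permutation, form its finite truncations, pass to the associated (inversion) graphs $G_1, G_2, \dots$, and let $\C$ be the hereditary closure of $\{G_n\}$ under induced subgraphs. The spiral is chosen precisely because its graphs carry a rigid, path-like combinatorial skeleton: there is essentially one way in which each $G_n$ spirals outward, which on the one hand is structurally simple enough to control, and on the other hand offers just enough freedom---once decorated with colours---to encode an infinite antichain. Concretely, for (iii) I would $2$-colour the vertices of the spiral graphs so that the colour pattern pins down a unique reconstruction of each graph, ensuring that no colour-respecting induced-subgraph embedding can exist between two distinct members; verifying that the resulting coloured graphs form an antichain should be a direct, if careful, check reminiscent of the double-ended-fork argument in Figure~\ref{fig-double-ended-forks}.

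The main obstacle is establishing (i), that the \emph{unlabelled} class $\C$ is wqo. This is delicate precisely because $\C$ is rich enough to support a labelled antichain: the argument must rule out \emph{every} infinite antichain of uncoloured graphs while that richness is present. I would handle this by exploiting the laminar/linear structure of the spiral to encode each graph in $\C$ as a word over a finite (or otherwise wqo) alphabet in an order-preserving way, and then invoke Higman's lemma to conclude that no infinite antichain survives; the crux is choosing an encoding faithful enough that an embedding of words forces a genuine induced-subgraph embedding of graphs. Establishing (ii), the finite basis, I expect to be more routine: one identifies the local obstructions forced by the spiral's rigid geometry, shows that every sufficiently large forbidden graph already contains a smaller one, and thereby bounds the antichain of minimal forbidden induced subgraphs to a finite list. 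Taken together, (i)--(iii) contradict the conjecture, since $\C$ is then a finitely based, wqo class that fails to be lwqo.
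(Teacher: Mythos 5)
Your strategy coincides with the paper's: refute the reverse implication by taking the downward closure $\mathcal{W}$ of the widdershins spirals, passing to the permutation graphs $G_{\mathcal{W}}$, proving wqo via a word encoding and Higman's Lemma, computing a finite basis, and exhibiting an infinite antichain of graphs labelled by a $2$-element antichain. The difficulty is that, as written, every substantive step is deferred, and two of the deferred steps conceal real work that the sketch does not acknowledge. The Higman encoding does not fall out of the spiral's ``linear structure'' by itself: the spirals are rigid, but their downward closure is not, and you need a structure theorem for \emph{arbitrary} members of $\mathcal{W}$ before any alphabet or order-preserving word map can be defined. The paper supplies this as the ring decomposition (Proposition~\ref{prop-ring-decomp}): every $\pi\in\mathcal{W}$ is one of $1\oplus\alpha$, $\alpha\oplus 1$, $1\ominus\alpha$, $\alpha\ominus 1$, or a central insertion of a member of $\mathcal{W}$ into a spiral of length at least $4$; proving it requires analyzing which points of a minimal containing spiral can be deleted. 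It is also cleaner to encode permutations rather than graphs, since wqo of $\mathcal{W}$ transfers to $G_{\mathcal{W}}$ automatically.

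The step you call ``more routine'' --- the finite basis --- is in fact the most labour-intensive part of the paper, and your plan for it would not go through as stated. The basis of $\mathcal{W}$ (Proposition~\ref{prop-W-basis}) needs a delicate case analysis; because $G_{\mathcal{W}}=G_{\mathcal{W}\cup\mathcal{W}^{-1}}$ one must then compute the basis of the union $\mathcal{W}\cup\mathcal{W}^{-1}$ (Corollary~\ref{cor-WcupWinv-basis}, obtained by a computer search up to length $12$); and, crucially, a finitely based permutation class does not automatically yield a finitely based graph class, because $\pi\mapsto G_\pi$ is many-to-one --- a graph could avoid all $G_\beta$ for basis permutations $\beta$ while only being realized by permutations outside $\mathcal{W}\cup\mathcal{W}^{-1}$. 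The paper closes this gap with the F\"oldes--Hammer characterization of split permutation graphs (Corollary~\ref{cor-split-perm-graph-basis}), which guarantees that every graph avoiding $2K_2$, $C_4$, $C_5$, net, co-net, rising sun, and co-rising sun is $G_\pi$ for some skew-merged $\pi$, so that forbidding finitely many additional graphs suffices (Proposition~\ref{prop-basis-GW}). Your antichain sketch is the closest to complete: colouring the two ``ends'' of each spiral graph white and arguing rigidity is exactly Proposition~\ref{prop-GW-not-2wqo}, but the actual verification there rests on identifying the copies of $P_4$ as the consecutive quadruples $\{u_i,v_i,u_{i+1},v_{i+1}\}$ and tracking the unique white-leaf and white-non-leaf copies, which still has to be carried out.
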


We disprove Conjecture~\ref{false-conj-lwqo}. In particular, we construct a class of (permutation) graphs which is wqo and defined by finitely many minimal forbidden induced subgraphs, but is not lwqo. Note that the class of linear forests does not present a counterexample to Conjecture~\ref{false-conj-lwqo}, as the set of minimal forbidden subgraphs of this family consists of $K_{1,3}$ and the chordless cycles, $\{C_n\st n\ge 3\}$.

Notions of order between wqo and lwqo have been studied as far back as 1972, by Pouzet~\cite{pouzet:un-bel-ordre-da:}. He defined the class $\C$ to be \emph{$n$-well-quasi-ordered} (\emph{$n$-wqo}) if the set of all graphs in $\C$ labelled by an $n$-element antichain does not contain an infinite antichain in the labelled induced subgraph order. Pouzet stated the following tantalizing conjecture, which remains open.

\begin{conjecture}[Pouzet~\cite{pouzet:un-bel-ordre-da:}]
A class of graphs is $2$-wqo if and only if it is $n$-wqo for every $n\ge 2$.
\end{conjecture}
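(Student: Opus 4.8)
The plan is to dispose of one direction immediately and to organize the substantive direction as an induction on $n$. The implication asserting that being $n$-wqo for every $n\ge 2$ forces $2$-wqo is trivial: a $2$-element antichain is in particular an $n$-element antichain with $n=2$, so the hypothesis specializes to exactly the desired conclusion. All of the difficulty lies in the converse, that $2$-wqo implies $n$-wqo for every $n\ge 2$; since the statement is flagged as open, I treat this as a genuinely speculative plan rather than a route I expect to complete.

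For the converse I would induct on $n$, the base case $n=2$ being the hypothesis. Granting the inductive hypothesis that $\C$ is $k$-wqo for all $k\le n$, the goal is to show that an arbitrary infinite sequence of graphs in $\C$ labelled by an $(n+1)$-element antichain $\{c_0,c_1,\dots,c_n\}$ contains a labelled induced embedding between two of its terms. The natural strategy is to build an order-preserving and order-reflecting encoding that simulates the colour $c_n$ using only resources already controlled by the inductive hypothesis: for instance, attach to each $c_n$-vertex a small gadget that flags it, recolour that vertex with one of the remaining $n$ colours, and thereby turn each $(n+1)$-coloured graph into an $n$-coloured graph whose labelled embeddings correspond bijectively to those of the original. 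If such an encoding could be arranged to land inside $\C$ and to be faithful, the inductive hypothesis would close the argument. For highly structured classes — those built up by modular or substitution decomposition, in the spirit of the Atminas–Lozin analysis of co-graphs — one can reasonably hope to exhibit such gadgets and make the induction run.

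The hard part, and the reason this conjecture has resisted resolution since Pouzet's work, is that no such encoding exists uniformly. Merging or forgetting colours only \emph{relaxes} the labelled order and so cannot be used to certify embeddings of the finer structure, while a gadget-based simulation demands vertices and adjacencies that an arbitrary class $\C$ need not contain, so it cannot be performed across all classes at once. At root the obstruction is the familiar gap between well-quasi-order and Nash-Williams' better-quasi-order: the closure properties under the sequence- and labelling-type operations that would make this induction go through are precisely those enjoyed by better-quasi-order but not known to follow from $2$-wqo alone. Absent a genuinely new mechanism for manufacturing such closure from the $2$-wqo hypothesis, I would not expect the naive inductive encoding to succeed, which is consistent with the statement remaining an open conjecture.
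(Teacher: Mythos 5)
This statement is not proved in the paper: it is quoted verbatim as a conjecture of Pouzet and explicitly described as remaining open, so there is no proof of the paper's to compare yours against. Your handling of the easy direction is correct --- being $n$-wqo for every $n\ge 2$ specializes at $n=2$ to being $2$-wqo (and, though you do not need it, the hierarchy is monotone: a $k$-element antichain of labels sits inside any larger antichain of labels, so $(n+1)$-wqo implies $n$-wqo). But the substantive direction, that $2$-wqo implies $n$-wqo for all $n\ge 2$, is exactly the open content of the conjecture, and your proposal does not close it.

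The gap is concrete: the inductive step you sketch requires, for each class $\mathcal{C}$ and each colour to be eliminated, a gadget encoding of $(n+1)$-coloured members of $\mathcal{C}$ by $n$-coloured members of $\mathcal{C}$ that both preserves and reflects labelled embeddings and stays inside $\mathcal{C}$. As you yourself observe, such gadgets demand vertices and adjacencies that an arbitrary hereditary class need not contain, while merely merging or forgetting colours reflects embeddings in the wrong direction; so the induction has no mechanism to advance past the base case. Your write-up is candid about this, which is to its credit, but as a proof it establishes only the trivial implication. The conjecture itself remains open, which is consistent with how the paper presents it; note also that the paper's counterexample to Conjecture~\ref{false-conj-lwqo} is engineered to fail already at the $2$-wqo level precisely so as not to bear on Pouzet's question.
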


Obviously an lwqo class is also $n$-wqo for all $n\ge 2$ and is in particular $2$-wqo. Moreover, our proof of Proposition~\ref{prop-lwqo-implies-finite-basis} only used a $2$-element antichain\footnote{For this reason, Proposition~\ref{prop-lwqo-implies-finite-basis} can be strengthened to state that every $2$-wqo class of graphs is defined by finitely many minimal forbidden induced subgraphs.}, and the counterexample we present to Conjecture~\ref{false-conj-lwqo} also fails to be $2$-wqo. Perhaps something far stronger is true: is a class of graphs lwqo if and only if it is $2$-wqo?

\section{Permutations and Permutation Graphs}
\label{sec-perm-graphs}

Our counterexample to Conjecture~\ref{false-conj-lwqo} consists of a class of permutation graphs, and its construction makes use of several tools which have been introduced in the study of permutation patterns. For a broad overview of permutation patterns, we refer the reader to the third author's survey~\cite{vatter:permutation-cla:}, and review only what is necessary for our construction here.

Throughout this work, we view permutations in several slightly different ways, one of them being one-line notation. In this viewpoint, given permutations $\sigma=\sigma(1)\cdots\sigma(k)$ and $\pi=\pi(1)\cdots\pi(n)$, we say that $\sigma$ is \emph{contained} in $\pi$ if there are indices $1\le i_1<\cdots<i_k\le n$ such that the sequence $\pi(i_1)\cdots\pi(i_k)$ is in the same relative order as $\sigma$. If $\pi$ does not contain $\sigma$, then we say that it \emph{avoids} it. For us, a \emph{class} of permutations is a set of permutation closed downward under this containment order.

The \emph{permutation graph} of the permutation $\pi=\pi(1)\cdots\pi(n)$ is the graph $G_\pi$ on the vertices $\{1,\dots,n\}$ in which $i$ is adjacent to $j$ if and only if both $i<j$ and $\pi(i)>\pi(j)$. This mapping is many-to-one, as witnessed by the fact that $G_{2413}\cong G_{3142}\cong P_4$, for example. If $X$ is a set (or class) of permutations, then we denote by $G_X$ the set (or class) of permutation graphs corresponding to the members of $X$. A classic result of Dushnik and Miller~\cite{dushnik:partially-order:} states that a graph is (isomorphic to) a permutation graph if and only if it is both a comparability graph and a co-comparability graph. As shown by Gallai~\cite{gallai:transitiv-orien:}, the class of all permutation graphs is defined by infinitely many minimal forbidden induced subgraphs.

\begin{figure}
\begin{footnotesize}
\begin{center}
	\begin{tabular}{ccccccc}
	\begin{tikzpicture}[scale=0.2, baseline=(current bounding box.center)]
		\plotpermbox{0.5}{0.5}{8.5}{8.5};
		\plotpartialperm{1/3,4/8,5/5,7/1,8/4};
	\end{tikzpicture}
	&
	\begin{tikzpicture}[baseline=(current bounding box.center)]
		\node {$\le$};
	\end{tikzpicture}
	&
	\begin{tikzpicture}[scale=0.2, baseline=(current bounding box.center)]
		\plotpermbox{0.5}{0.5}{8.5}{8.5};
		\plotperm{3,6,2,8,5,7,1,4};
		\draw (1,3) circle (16pt);
		\draw (4,8) circle (16pt);
		\draw (5,5) circle (16pt);
		\draw (7,1) circle (16pt);
		\draw (8,4) circle (16pt);
	\end{tikzpicture}
	&
	\quad\quad\quad\quad
	&
	\begin{tikzpicture}[scale=0.2, baseline=(current bounding box.center)]
	    \draw (4,8) to [out=324.666666667, in=117] (7,1);
	    \draw (5,5) to [out=270, in=144] (7,1);
	    \draw (1,3) to [out=270, in=225] (7,1);
	    \draw (4,8) to [out=45, in=45] (8,4);
	    \draw (5,5) to [out=315, in=180] (8,4);
	    \draw (4,8) to [out=284.5, in=90] (5,5);
		\plotpermbox{0.5}{0.5}{8.5}{8.5};
		\plotpartialperm{1/3,4/8,5/5,7/1,8/4};
	\end{tikzpicture}
	&
	\begin{tikzpicture}[baseline=(current bounding box.center)]
		\node {$\le$};
	\end{tikzpicture}
	&
	\begin{tikzpicture}[scale=0.2, baseline=(current bounding box.center)]
	    \draw (6,7) to [out=300, in=90] (7,1);
	    \draw (4,8) to [out=324.666666667, in=117] (7,1);
	    \draw (5,5) to [out=270, in=144] (7,1);
	    \draw (2,6) to [out=315, in=171] (7,1);
	    \draw (3,2) to [out=315, in=198] (7,1);
	    \draw (1,3) to [out=270, in=225] (7,1);
	    \draw (4,8) to [out=45, in=45] (8,4);
	    \draw (6,7) to [out=330, in=90] (8,4);
	    \draw (2,6) to [out=45, in=135] (8,4);
	    \draw (5,5) to [out=315, in=180] (8,4);
	    \draw (1,3) to [out=0, in=135] (3,2);
	    \draw (2,6) to [out=270, in=90] (3,2);
	    \draw (2,6) to [out=0] (5,5);
	    \draw (4,8) to [out=284.5, in=90] (5,5);
	    \draw (4,8) to [out=4.833333333] (6,7);
		\plotpermbox{0.5}{0.5}{8.5}{8.5};
		\plotperm{3,6,2,8,5,7,1,4};
		\draw (1,3) circle (16pt);
		\draw (4,8) circle (16pt);
		\draw (5,5) circle (16pt);
		\draw (7,1) circle (16pt);
		\draw (8,4) circle (16pt);
	\end{tikzpicture}
	\\
	$25413$
	&
	$\le$
	&
	$36285714$
	&&
	$G_{25413}$
	&
	$\le$
	&
	$G_{36285714}$
	\end{tabular}
\end{center}
\end{footnotesize}
\caption{The containment order on permutations and their corresponding permutation graphs.}
\label{fig-perm-contain}
\end{figure}
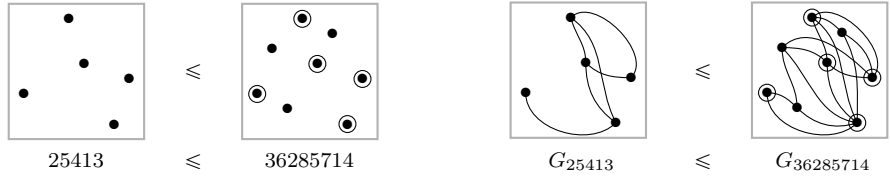

As done in Figure~\ref{fig-perm-contain}, we often identify a permutation $\pi$ with its \emph{plot}: the set of points $\{(i,\pi(i))\}$ in the plane. It is easy to check that if $\sigma$ is contained in $\pi$ then $G_\sigma$ is an induced subgraph of $G_\pi$, but the converse does not hold (returning to our example from above, $G_{2413}$ is an induced subgraph of $G_{3142}$ because the two graphs are isomorphic, but of course the permutation $2413$ is not contained in the permutation $3142$). For this reason, given a permutation class $\C$, it is stronger to show that $\C$ is wqo than to show that $G_{\C}$ is wqo, just as it is a stronger result that $G_\C$ is \emph{not} wqo than that $\C$ is not wqo.

When identifying permutations with their plots, it is clear that the permutation containment order respects all eight symmetries of the plane (which is isomorphic to the dihedral group on $8$ elements). Of these symmetries, three are particularly important to this work: the group-theoretic inverse, $\pi^{-1}$, which is obtained by reflecting the plot of $\pi$ about the line $y=x$; the reverse-complement, $\pi^{\text{rc}}$, obtained by reflecting the plot of $\pi$ about the line $y=-x$ (and then shifting); and the other symmetry obtained by composing these, $(\pi^{\text{rc}})^{-1}$. Note these symmetries do not affect the corresponding permutation graphs: for all permutations $\pi$, we have
\[
	G_\pi
	\cong
	G_{\pi^{-1}}
	\cong
	G_{\pi^{\text{rc}}}
	\cong
	G_{(\pi^{\text{rc}})^{-1}}.
\]
Thus for all permutation classes $\C$, we have
\[
	G_\C = G_{\C\cup\C^{-1}\cup\C^{\text{rc}}\cup (\C^{\text{rc}})^{-1}}.
\]

Sometimes we take a more liberal view and identify permutations with finite \emph{generic} sets of points in the plane, that is, with finite sets of points in the plane in which no two share the same $x$- or $y$-coordinate. If one labels a finite generic set of points in the plane from $1$ to $n$ by height (i.e., from bottom-to-top) and then records these labels reading left-to-right, a unique permutation is obtained.

\begin{figure}
\begin{center}
	$\pi\oplus\sigma=$
	\begin{tikzpicture}[scale=0.5, baseline=(current bounding box.center)]
		\plotpermbox{1}{1}{1}{1};
		\plotpermbox{2}{2}{2}{2};
		\node at (1,1) {$\pi$};
		\node at (2,2) {$\sigma$};
	\end{tikzpicture}
\quad\quad\quad\quad
	$\pi\ominus\sigma=$
	\begin{tikzpicture}[scale=0.5, baseline=(current bounding box.center)]
		\plotpermbox{1}{2}{1}{2};
		\plotpermbox{2}{1}{2}{1};
		\node at (1,2) {$\pi$};
		\node at (2,1) {$\sigma$};
	\end{tikzpicture}
\end{center}
\caption{The sum and skew sum operations on permutations.}
\label{fig-sums}
\end{figure}

We reference two specific permutation classes in this work. To define the first one we need the notions of the \emph{sum} and \emph{skew sum} of two permutations, which are pictorially defined in Figure~\ref{fig-sums}. More formally, if $\pi$ has length $k$ and $\sigma$ has length $\ell$, the sum of $\pi$ and $\sigma$ is the permutation defined as
\[
	(\pi\oplus\sigma)(i)
	=
	\left\{\begin{array}{ll}
	\pi(i)&\mbox{for $i\in[1,k]$},\\
	\sigma(i-k)+k&\mbox{for $i\in[k+1,k+\ell]$}.
	\end{array}\right.
\]
Similarly, the skew sum of these two permutations is defined is defined as
\[
	(\pi\ominus\sigma)(i)
	=
	\left\{\begin{array}{ll}
	\pi(i)+\ell&\mbox{for $i\in[1,k]$},\\
	\sigma(i-k)&\mbox{for $i\in[k+1,k+\ell]$}.
	\end{array}\right.
\]

A permutation that can be expressed as the sum (resp., skew sum) of two shorter permutations is said to be \emph{sum (resp., skew) decomposable}, and otherwise \emph{sum (resp., skew) indecomposable}. It is easy to see that every permutation $\pi$ can be expressed as the sum (resp., skew sum) of a sequence of sum (resp., skew) indecomposable permutations, that is, it can be written as $\pi=\alpha_1\oplus\cdots\oplus\alpha_m$ (resp., $\pi=\alpha_1\ominus\cdots\ominus\alpha_m$) where each $\alpha_i$ is sum (resp., skew) indecomposable (note that both sum and skew sum are associative operations, so there is no ambiguity in these expressions).

A permutation is \emph{separable} if it can be built from the permutation $1$ by repeated sums and skew sums. The set of separable permutations forms a permutation class. While this class first appeared in the work of Avis and Newborn~\cite{avis:on-pop-stacks-i:}, it was Bose, Buss, and Lubiw~\cite{bose:pattern-matchin:} who coined the term separable and proved that the minimal forbidden permutations for the class of separable permutations are $2413$ and $3142$. A graph theorist may recognize that this class of permutations is analogous to the class of \emph{complement-reducible graphs} (or \emph{co-graphs} for short), which are those graphs that can be built from $K_1$ via disjoint unions and stars. Indeed, a graph is a co-graph if and only if it is the permutation graph of a separable permutation.

Another permutation class we need is the class of \emph{skew-merged permutations}, which are those permutations whose entries can be partitioned into an increasing subsequence and a decreasing subsequence. This class was first introduced by Stankova~\cite{stankova:forbidden-subse:}, who proved that a permutation is skew-merged if and only if it avoids both $2143$ and $3412$. The analogous class of graphs is the class of \emph{split graphs}, whose vertices can be partitioned into an independent set and a clique. F\"oldes and Hammer~\cite{foldes:split-graphs:} proved that a graph is split if and only if it does not contain $2K_2$, $C_4$, or $C_5$ as induced subgraphs. Note that not all split graphs are permutation graphs of skew-merged permutations (for more details on this, we refer the reader to Corollary~\ref{cor-split-perm-graph-basis}).

Finally we need a term from the substitution decomposition of permutations (often called the modular decomposition in graph theory~\cite{gallai:transitiv-orien:}). An \emph{interval} in the permutation $\pi$ is a set of contiguous indices $I=[a,b]$ such that the set of values $\pi(I)=\{\pi(i) : i\in I\}$ is also contiguous. Every permutation of length $n$ has \emph{trivial} intervals of lengths $0$, $1$, and $n$; other intervals (if they exist) are called \emph{proper}.

\section{Widdershins Spirals}

Central to our counterexample to Conjecture~\ref{false-conj-lwqo} is a certain infinite family of permutations. In his thesis~\cite{murphy:restricted-perm:}, Murphy named these permutations widdershins spirals (widdershins being a Lower Scots word meaning ``to go anti-clockwise''). This name is explained by the drawing on the left of Figure~\ref{fig-widdershins}. Widdershins spirals are also a special type of pin sequence, as defined by Brignall, Huczynska, and Vatter~\cite{brignall:decomposing-sim:}, and it is this viewpoint we utilize to give a formal definition of them.

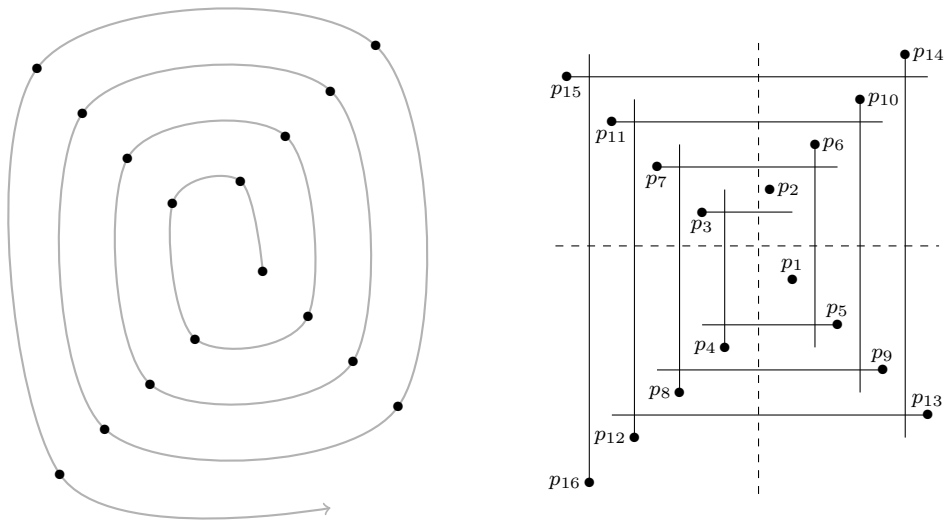
\begin{figure}
\begin{footnotesize}
\begin{center}
  \begin{tikzpicture}[scale=0.3, baseline=(current bounding box.center)]
    \draw [thick, darkgray, ->] plot [smooth, tension=0.6] coordinates {(2,-2) (1,2) (-2,1) (-1,-5) (4,-4) (3,4) (-4,3) (-3,-7) (6,-6) (5,6) (-6,5) (-5,-9) (8,-8) (7,8) (-8,7) (-7,-11) (5,-12.5)};
    \plotpartialperm{2/-2, 1/2, -2/1, -1/-5, 4/-4, 3/4, -4/3, -3/-7, 6/-6, 5/6, -6/5, -5/-9, 8/-8, 7/8, -8/7, -7/-11}
  \end{tikzpicture}
\quad\quad\quad\quad
  \begin{tikzpicture}[scale=0.3, baseline=(current bounding box.center)]
    \plotpartialperm{2/-2, 1/2, -2/1, -1/-5, 4/-4, 3/4, -4/3, -3/-7, 6/-6, 5/6, -6/5, -5/-9, 8/-8, 7/8, -8/7, -7/-11}
	\draw (-2,1)--(2,1);
	\draw (-1,-5)--(-1,2);
	\draw (4,-4)--(-2,-4);
	\draw (3,4)--(3,-5);
	\draw (-4,3)--(4,3);
	\draw (-3,-7)--(-3,4);
	\draw (6,-6)--(-4,-6);
	\draw (5,6)--(5,-7);
	\draw (-6,5)--(6,5);
	\draw (-5,-9)--(-5,6);
	\draw (8,-8)--(-6,-8);
	\draw (7,8)--(7,-9);
	\draw (-8,7)--(8,7);
	\draw (-7,-11)--(-7,8);
	\node at (2,-2) [above] {$p_1$};
	\node at (1,2) [right] {$p_2$};
	\node at (-2,1) [below] {$p_3$};
	\node at (-1,-5) [left] {$p_4$};
	\node at (4,-4) [above] {$p_5$};
	\node at (3,4) [right] {$p_6$};
	\node at (-4,3) [below] {$p_7$};
	\node at (-3,-7) [left] {$p_8$};
	\node at (6,-6) [above] {$p_9$};
	\node at (5,6) [right] {$p_{10}$};
	\node at (-6,5) [below] {$p_{11}$};
	\node at (-5,-9) [left] {$p_{12}$};
	\node at (8,-8) [above] {$p_{13}$};
	\node at (7,8) [right] {$p_{14}$};
	\node at (-8,7) [below] {$p_{15}$};
	\node at (-7,-11) [left] {$p_{16}$};
	\draw [dashed] (-8.5,-0.5)--(8.5,-0.5);
	\draw [dashed] (0.5,-11.5)--(0.5,8.5);
  \end{tikzpicture}
\end{center}
\end{footnotesize}
\caption{Two depictions of the same widdershins spiral of standard orientation.}
\label{fig-widdershins}
\end{figure}

An \emph{axis-parallel rectangle} is any rectangle in the plane in which all of its sides are parallel to the $x$- or $y$-axis. The \emph{rectangular hull} of a set of points in the plane is defined as the smallest axis-parallel rectangle containing them. Given a sequence of points $(p_1,\dots,p_i)$ in the plane, a \emph{proper pin} for this sequence is a point $p$ that lies outside their rectangular hull and \emph{separates} $p_i$ from $\{p_1,\dots,p_{i-1}\}$, meaning that $p$ lies either horizontally or vertically between $p_i$ and the rectangular hull of $\{p_1,\dots,p_{i-1}\}$. A \emph{proper pin sequence} is then constructed by starting with two points $p_1$ and $p_2$, choosing $p_3$ to be a proper pin for $(p_1,p_2)$, then choosing $p_4$ to be a proper pin for $(p_1,p_2,p_3)$, and so on. We describe pins as either \emph{left}, \emph{right}, \emph{up}, or \emph{down} based on their position relative to the rectangular hull of $\{p_1,\dots,p_{i-1}\}$. Note that the direction of a pin uniquely specifies its position relative to the previous points in a pin sequence.

Given this terminology, we may now define widdershins spirals. The \emph{widdershins spiral of standard orientation} is formed by starting with two points $p_1$ and $p_2$ such that $p_2$ lies to the northwest of $p_1$, and then taking any pin sequence of at least two more points where the positions of the pins $p_3$, $p_4$, $\dots$ constitute an initial segment of the repeating pattern left, down, right, up, left, down, right, up, $\dots$. The drawing on the right of Figure~\ref{fig-widdershins} shows a widdershins spiral from this perspective. We consider all rotations of a widdershins spiral of standard orientation by $90^\circ$, $180^\circ$, or $270^\circ$ to also be widdershins spirals. Note that all widdershins spirals are skew-merged permutations.

Denote by $\mathcal{W}$ the \emph{downward closure} of the set of widdershins spirals, i.e., $\mathcal{W}$ is the class of all permutations which are contained in some widdershins spiral. Because the widdershins spirals are skew-merged permutations, it follows that $\mathcal{W}$ is a subclass of the skew-merged permutations. Another notable property of $\mathcal{W}$ is that if $\pi\in\mathcal{W}$, then each of $1\oplus\pi$, $\pi\oplus 1$, $1\ominus\pi$, and $\pi\ominus 1$ are also members of $\mathcal{W}$. Finally, note that $\mathcal{W}$ is closed under rotations by $90^\circ$, $180^\circ$, and $270^\circ$.

We now introduce a decomposition for members of $\mathcal{W}$. First suppose that the permutation $\pi\in\mathcal{W}$ is sum decomposable, and thus can be written as $\pi=\sigma\oplus\tau$ for nonempty permutations $\sigma$ and $\tau$. Because $\pi\in\mathcal{W}$, it is skew-merged, and thus avoids $2143$. Therefore at most one of $\sigma$ and $\tau$ may contain the permutation $21$, so at least one of the $\sigma$ or $\tau$ must be increasing. It follows that every sum decomposable permutation in $\mathcal{W}$ can be expressed as either $1\oplus\alpha$ or $\alpha\oplus 1$. Since all permutations in $\mathcal{W}$ avoid $3412$, a similar argument shows that all skew decomposable permutations in this class can be expressed as either $1\ominus\alpha$ or $\alpha\ominus 1$.

\begin{figure}
\begin{center}
  \begin{tikzpicture}[scale=0.15, baseline=(current bounding box.center)]
    \draw [white] (0,-4)--(0,8); 
    \draw [darkgray, thick, line cap=round] (0,0) rectangle (5,5);
	\node at (2.5, 2.5) {$\alpha$};
    \plotpartialperm{-1/-1};
  \end{tikzpicture}
\quad\quad
  \begin{tikzpicture}[scale=0.15, baseline=(current bounding box.center)]
    \draw [white] (0,-4)--(0,8); 
    \draw [darkgray, thick, line cap=round] (0,0) rectangle (5,5);
	\node at (2.5, 2.5) {$\alpha$};
    \plotpartialperm{6/6};
  \end{tikzpicture}
\quad\quad
  \begin{tikzpicture}[scale=0.15, baseline=(current bounding box.center)]
    \draw [white] (0,-4)--(0,8); 
    \draw [darkgray, thick, line cap=round] (0,0) rectangle (5,5);
	\node at (2.5, 2.5) {$\alpha$};
    \plotpartialperm{-1/6};
  \end{tikzpicture}
\quad\quad
  \begin{tikzpicture}[scale=0.15, baseline=(current bounding box.center)]
    \draw [white] (0,-4)--(0,8); 
    \draw [darkgray, thick, line cap=round] (0,0) rectangle (5,5);
	\node at (2.5, 2.5) {$\alpha$};
    \plotpartialperm{6/-1};
  \end{tikzpicture}
\quad\quad
  \begin{tikzpicture}[scale=0.15, baseline=(current bounding box.center)]
    \draw [white] (0,-3)--(0,8); 
    \draw [thick, darkgray, ->] plot [smooth, tension=0.6] coordinates {(-1,-2) (7,-1) (6,7) (-3,6) (-2,-4)};
    \plotpartialperm{-1/-2, 7/-1, 6/7, -3/6}
    \draw [darkgray, thick, line cap=round] (0,0) rectangle (5,5);
	\node at (2.5, 2.5) {$\alpha$};
  \end{tikzpicture}
\end{center}
\caption{The five possibilities in the ring decomposition of members of $\mathcal{W}$.}
\label{fig-ring-decomp}
\end{figure}
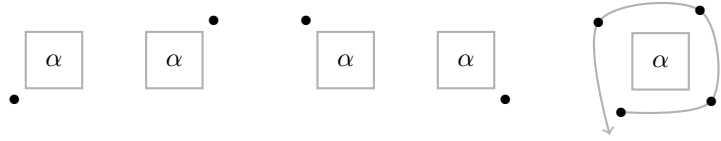

To handle members of $\mathcal{W}$ which are neither sum nor skew decomposable we introduce an operation we refer to as \emph{central insertion}. If one thinks of a widdershins spiral of standard orientation as being centered at the origin of the plane, with the $x$-axis lying vertically between $p_1$ and $p_3$ and the $y$-axis lying horizontally between $p_4$ and $p_2$ (these are drawn as dashed lines on the right of Figure~\ref{fig-widdershins}), then this operation consists of placing another permutation $\alpha$ (also from $\mathcal{W}$) at the origin and shifting the other points of the widdershin spiral appropriately (as shown on the right of Figure~\ref{fig-ring-decomp}). In doing so, the entries of $\alpha$ become an interval in the resulting permutation. We define central insertion into widdershins spirals of other orientations symmetrically.

Suppose that the permutation $\pi\in\mathcal{W}$ is neither sum nor skew decomposable and consider a shortest widdershins spiral $\Psi$ containing $\pi$. Label the points of $\Psi$ as $p_1$, $p_2$, $\dots$, $p_k$ as above, and fix an embedding of $\pi$ into $\Psi$. If $\pi$ is not equal to $\Psi$, then consider the outer-most point deleted from $\Psi$ to form $\pi$. This outer-most point cannot be $p_k$, because that would contradict the minimality of $\Psi$. If this outer-most point is $p_{k-1}$, $p_{k-2}$, or $p_{k-3}$, then $\pi$ is either sum or skew decomposable, another contradiction. If this outer-most point is $p_1$, then $\pi$ is still a widdershins spiral, again contradicting the minimality of $\Psi$. In all other cases, it follows from the construction of the widdershins spiral that $\pi$ contains a unique maximal proper interval, which lies at the center of the spiral, as shown on the right of Figure~\ref{fig-ring-decomp}.

These observations establish the following result, which we refer to as the \emph{ring decomposition} of the class $\mathcal{W}$.

\begin{proposition}
\label{prop-ring-decomp}
Every permutation in $\mathcal{W}$ can be expressed either as $1\oplus\alpha$, $\alpha\oplus 1$, $1\ominus\alpha$, and $\alpha\ominus 1$ for $\alpha\in\mathcal{W}$ or as the central insertion into a widdershins spiral of length at least $4$ by a (possibly empty) member of $\mathcal{W}$.
\end{proposition}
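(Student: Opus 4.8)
The plan is to argue by cases on how $\pi\in\mathcal{W}$ decomposes: $\pi$ is sum decomposable, skew decomposable, or neither. The first two cases are already handled by the observations preceding the statement. Since every member of $\mathcal{W}$ is skew-merged and therefore avoids both $2143$ and $3412$, a sum-decomposable $\pi=\sigma\oplus\tau$ can have at most one of its two summands containing $21$; the other summand is then increasing, and peeling a single point off the corresponding end exhibits $\pi$ as $1\oplus\alpha$ or $\alpha\oplus 1$. The skew-decomposable case is symmetric (using avoidance of $3412$) and produces $1\ominus\alpha$ or $\alpha\ominus 1$. In every instance $\alpha$ is contained in $\pi$ and hence lies in $\mathcal{W}$ by downward closure, so the genuine content lies in the case where $\pi$ is neither sum nor skew decomposable.

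For that case I would use the definition of $\mathcal{W}$ to fix a \emph{shortest} widdershins spiral $\Psi$ containing $\pi$, and invoke the rotation-closure of $\mathcal{W}$ to assume that $\Psi$ has standard orientation with pins $p_1,\dots,p_k$; I then fix an embedding of $\pi$ into $\Psi$. If $\pi=\Psi$ the conclusion holds with $\alpha$ empty, and here one checks $k\ge 4$ because every widdershins spiral of length at most $3$ is sum or skew decomposable. Otherwise, let $p_j$ be the deleted point of largest index. The crux is a geometric fact I would establish first from the pin-sequence construction: the last four pins $p_{k-3},p_{k-2},p_{k-1},p_k$ are precisely the extreme points of $\Psi$ in the four cardinal directions, and for each $i\in\{k-2,k-1,k\}$ the pin $p_i$ is the second-most extreme point of $\Psi$ in the direction in which $p_{i-1}$ is most extreme.

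With this fact the case analysis on $j$ is quick. If $j=k$, then $\pi$ embeds into $\Psi-p_k$, a strictly shorter widdershins spiral, contradicting minimality. If $j\in\{k-1,k-2,k-3\}$, then deleting $p_j$ while retaining every pin of larger index promotes $p_{j+1}$ to be most extreme in two perpendicular directions --- a corner point --- which forces $\pi$ to be sum or skew decomposable, contradicting the standing assumption. If $j=1$, then $\pi=\Psi-p_1$ is itself a widdershins spiral (of a rotated orientation), again contradicting minimality. In the remaining range $2\le j\le k-4$, the retained outer pins $p_{j+1},\dots,p_k$ form a widdershins spiral of length at least four, every retained point of smaller index lies at its center, and the deletion of $p_j$ is exactly what collapses those central points into a single maximal proper interval; this presents $\pi$ as the central insertion of the retained central points $\alpha\in\mathcal{W}$ into that spiral.

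I expect the main obstacle to be the geometric bookkeeping behind the extremity fact and the final case. One must verify carefully, straight from the definition of a proper pin sequence, that the retained points of index less than $j$ really do form one interval --- with $p_j$ being exactly the pin whose presence would otherwise protrude between that central cluster and the surrounding arms and so break the interval --- and that the retained outer pins genuinely satisfy the widdershins conditions as a spiral in their own right. By contrast, once the extremity fact is in place, the decomposability conclusions for $j\in\{k-1,k-2,k-3\}$ and the two minimality contradictions are immediate, and the small values of $k$ can be dispatched by direct inspection.
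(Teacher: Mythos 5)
Your proposal follows essentially the same route as the paper's own argument: the sum/skew-decomposable cases are dispatched via avoidance of $2143$ and $3412$, and the indecomposable case proceeds by taking a shortest widdershins spiral $\Psi$ containing $\pi$, examining the outer-most deleted pin $p_j$, and splitting into the cases $j=k$, $j\in\{k-1,k-2,k-3\}$, $j=1$, and the remaining range where the retained inner points form the central interval. The extra geometric detail you supply about the extremity of the last four pins is a reasonable elaboration of what the paper leaves implicit.
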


Note that there may be several different ring decompositions of a member of $\mathcal{W}$; for example, $(1\oplus\alpha)\oplus 1=1\oplus(\alpha\oplus 1)$. With a bit more effort, however, one can obtain a unique ring decomposition of each member of $\mathcal{W}$, which can produce the generating function for this class. We state this generating function below, omitting its proof because the result is not relevant to the goals of this paper.

\begin{corollary}
The generating function of the downward closure of the widdershins spirals is
\[
    \frac{1-4x+3x^2}{1-5x+6x^2-2x^3-x^4-3x^5}.
\]
%
%
\end{corollary}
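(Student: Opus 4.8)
The plan is to promote Proposition~\ref{prop-ring-decomp} to a genuinely \emph{unique} decomposition and then translate that decomposition into a system of functional equations for the generating function $f(x)=\sum_{\pi\in\mathcal{W}}x^{|\pi|}$, counting the empty permutation so that $f(x)=1+x+2x^2+\cdots$. Because every operation appearing in the decomposition either appends a single point or stacks a geometric ``ring'', the resulting system is linear in $f$, and solving it produces a rational function; the real task is to pin down numerator and denominator and to verify them against the initial terms $1,1,2,6,21,77,\dots$.

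First I would record clean characterizations of the four corner operations in terms of extreme entries: a nonempty $\pi\in\mathcal{W}$ satisfies $\pi=1\oplus\alpha$ exactly when $\pi(1)$ is the smallest entry, $\pi=1\ominus\alpha$ exactly when $\pi(1)$ is the largest entry, $\pi=\alpha\oplus 1$ exactly when $\pi(|\pi|)$ is the largest entry, and $\pi=\alpha\ominus 1$ exactly when $\pi(|\pi|)$ is the smallest entry, with $\alpha\in\mathcal{W}$ in each case. Combined with the observation from the previous section that in $\mathcal{W}$ every sum- or skew-decomposable permutation has one of these four forms, this shows that the permutations which are \emph{neither} sum nor skew decomposable are precisely those whose first and last entries are both strictly interior, and these are exactly the central insertions into widdershins spirals of length at least $4$.

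Next I would set up the bookkeeping by inclusion--exclusion on the status of the first and last entries, each being the minimum, the maximum, or interior. Each single corner condition contributes $xf(x)$ (peel the extreme point to recover an arbitrary $\alpha\in\mathcal{W}$), while each compatible pair of conditions, such as $\pi(1)$ minimal together with $\pi(|\pi|)$ maximal, that is $\pi=1\oplus\delta\oplus 1$ with $\delta\in\mathcal{W}$, contributes $x^2f(x)$; these corrections are exactly what repairs the double counting inherent in Proposition~\ref{prop-ring-decomp}, for instance $(1\oplus\alpha)\oplus 1=1\oplus(\alpha\oplus 1)$. For the central-insertion term I would use that for each of the four orientations and each length $\ell\ge 4$ there is a \emph{unique} widdershins spiral pattern, and that inserting an arbitrary $\alpha\in\mathcal{W}$ at its center multiplies by $f(x)$; summing the geometric series $\sum_{\ell\ge 4}x^{\ell}=x^4/(1-x)$ over orientations gives the rational ``ring'' contribution. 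Assembling the pieces yields a single linear equation for $f(x)$, which I would solve and clear of denominators to reach $\dfrac{1-4x+3x^2}{1-5x+6x^2-2x^3-x^4-3x^5}$.

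The main obstacle is genuine \emph{uniqueness} of the ring decomposition, and it lives entirely in the degenerate central blocks. When the inserted $\alpha$ is empty or a single point a central insertion can coincide with an honest widdershins spiral---a length-$4$ spiral with a point added at its center already looks like a longer spiral---and for small lengths distinct orientations can accidentally produce the same permutation. These coincidences must be stripped out by further inclusion--exclusion, and they are what force the denominator up to degree $5$ with its irregular coefficients, the residual overlaps among the rotated spirals being responsible for the later terms. Cataloguing these degenerate overlaps correctly, rather than the routine algebra of solving the linear system, is where the work lies, and matching the first several coefficients against $1,1,2,6,21,77,\dots$ is the check that the bookkeeping is complete.
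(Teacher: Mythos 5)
The paper states this corollary without proof --- the authors explicitly say only that ``with a bit more effort one can obtain a unique ring decomposition\dots which can produce the generating function'' --- so your route (uniquify Proposition~\ref{prop-ring-decomp} and extract a linear equation for $f$) is precisely the one they allude to, and its skeleton does yield the stated answer. Concretely: with $f$ counting $\mathcal{W}$ including the empty permutation, the four corner operations contribute $4x(f-1)$, the two (and only two) compatible pairs of corner conditions ($\pi(1)$ minimal with $\pi(n)$ maximal, and $\pi(1)$ maximal with $\pi(n)$ minimal) are subtracted as $2x^2f$, and the central insertions contribute $\bigl(x^4+\tfrac{4x^5}{1-x}\bigr)f$, because there is exactly \emph{one} widdershins spiral of length $4$ (all four orientations coincide in $3142$) but four of each length $\ell\ge 5$. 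Solving $f=1+x+4x(f-1)-2x^2f+\tfrac{x^4+3x^5}{1-x}f$ gives $\tfrac{1-4x+3x^2}{1-5x+6x^2-2x^3-x^4-3x^5}$.

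The gap is in exactly the place you defer to: the catalogue of degeneracies, and your one concrete guess about them is false. The central insertion of a single point into the length-$4$ spiral $3142$ is $41352$, whereas the length-$5$ spirals are $41532$, $41253$, $43152$, and $31452$; so a length-$4$ spiral with a point added at its centre does \emph{not} ``look like a longer spiral,'' and subtracting a correction for that non-existent coincidence would destroy the formula. The only genuine collapse among spirals is the fourfold coincidence at length $4$, which is what turns $4x^4/(1-x)$ into $(x^4+3x^5)/(1-x)$. More substantively, the step that actually needs proving --- that the map from pairs (bare spiral of length $\ell\ge4$, central content $\alpha\in\mathcal{W}$) to permutations is injective --- is not addressed: for $|\alpha|\ge 2$ it follows from the uniqueness of the maximal proper interval invoked in the paper, but for $|\alpha|\le 1$ the inserted block is not a proper interval and a separate argument (e.g., that bare spirals are simple and that no spiral with a central point equals a longer spiral) is required. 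Without that, together with the surjectivity supplied by Proposition~\ref{prop-ring-decomp}, your linear equation is only an upper bound, so the proposal is a correct plan whose decisive verification is missing. (A smaller slip: each single corner condition should contribute $x(f-1)$ rather than $xf$, else the length-one permutation is counted four times.)
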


The ring decomposition of $\mathcal{W}$ also allows us to show that the class is wqo, but first we must recall Higman's Lemma. Suppose that $(\Sigma,\le)$ is any poset. Then the \emph{generalized subword order} on $\Sigma^\ast$ is defined by $v\le w$ if there are indices $1\le i_1<i_2<\cdots<i_{|v|}\le|w|$ such that $v(j)\le w(i_j)$ for all $j$.

\begin{higmans-lemma*}
If $(\Sigma,\le)$ is well quasi-ordered then $\Sigma^*$, ordered by the generalized subword order, is also well quasi-ordered.
\end{higmans-lemma*}

Given any ring decomposition of a permutation in $\mathcal{W}$, we now describe how to encode the decomposition as a word over the alphabet
\[
	\Sigma
	=
	\{ \mathsf{NE}_k,\ \mathsf{NW}_k,\ \mathsf{SW}_k,\ \mathsf{SE}_k
		\st
		\text{$k=1$ or $k\ge 4$}
	\}.
\]
Our goal is to give a partial order for $\Sigma$ that is a wqo, and construct an encoding $w_\pi\in\Sigma^\ast$ of every permutation $\pi\in\mathcal{W}$ so that if $w_\sigma$ is a generalized subword of $w_\pi$ then $\sigma$ is a subpermutation of $\pi$. Higman's Lemma will then allow us to conclude that $\mathcal{W}$ is itself wqo.

To form this encoding, first fix one ring decomposition of every member of $\mathcal{W}$. (Each member has at least one such decomposition by Proposition~\ref{prop-ring-decomp}, and for the purposes of this argument we do not need to specify which is chosen.) The encoding of the chosen ring decomposition of the permutation $\pi\in\mathcal{W}$ is denoted $w_\pi$. First, if $\pi$ is of the form $1\oplus\alpha$, then $w_\pi=w_\alpha\,\mathsf{SW}_1$, which denoted that $\pi$ is obtained by adding a single point in the southwest quadrant to $\alpha$. Similarly, if $\pi$ is of the form $\alpha\oplus 1$, $1\ominus\alpha$, or $\alpha\ominus 1$, respectively, then $\pi$ is encoded as $w_\alpha\,\mathsf{NE}_1$, $w_\alpha\,\mathsf{NW}_1$, or $w_\alpha\,\mathsf{SE}_1$. Finally, if $\pi$ is formed by inserting a permutation $\alpha\in\mathcal{W}$ into the center of a widdershins spiral of length at least $4$, then we encode $\pi$ as either $w_\alpha\,\mathsf{NE}_k$, $w_\alpha\,\mathsf{NW}_k$, $w_\alpha\,\mathsf{SW}_k$, or $w_\alpha\,\mathsf{SE}_k$, where $k$ denotes the length of the widdershins spiral and the direction ($\mathsf{NE}$, $\mathsf{NW}$, $\mathsf{SW}$, or $\mathsf{SE}$) denotes the position of the first point of the widdershins spiral relative to $\alpha$.

We now place a partial order on our alphabet $\Sigma$, which is based on the permutation containment order. While one might note that no matter what quadrant the first point of a widdershins spiral lies in, it is contained in all widdershins spirals of length at least $3$ greater, this observation is not necessary for our proof. Instead, we simply need to note that all widdershins spirals with their first point in a particular quadrant form a chain, and thus we can take our partial order on $\Sigma$ to be simply a union of four chains: $\mathsf{NE}_k\le\mathsf{NE}_\ell$ if $k\le\ell$, and similarly for the other three quadrants.

Clearly $\Sigma$ is wqo under this partial order, and moreover, it is clear that if $w_{\sigma}$ is a generalized subword of $w_{\pi}$ then $\sigma$ is a subpermutation of $\pi$. Thus Higman's Lemma implies that $\mathcal{W}$ is itself wqo.

\begin{proposition}
\label{prop-W-wqo}
The downward closure of the widdershins spirals is wqo.
\end{proposition}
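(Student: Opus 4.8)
The plan is to reduce the well-quasi-ordering of $\mathcal{W}$ to Higman's Lemma by encoding each permutation as a word over the alphabet $\Sigma$ and showing that this encoding reflects the containment order. Concretely, using the ring decomposition guaranteed by Proposition~\ref{prop-ring-decomp}, I would fix for each $\pi\in\mathcal{W}$ a single decomposition and read off the associated word $w_\pi\in\Sigma^\ast$, each letter recording one ``peeling'' step: either a single point added in one of the four quadrants ($\mathsf{SW}_1$, $\mathsf{NE}_1$, $\mathsf{NW}_1$, $\mathsf{SE}_1$, corresponding to $1\oplus\alpha$, $\alpha\oplus1$, $1\ominus\alpha$, $\alpha\ominus1$), or an entire widdershins ring of some length $k\ge 4$ with a specified orientation. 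The target is the reflection property: whenever $w_\sigma$ is a generalized subword of $w_\pi$ under the union-of-four-chains order on $\Sigma$, it follows that $\sigma$ is contained in $\pi$.

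Granting this property, the argument closes quickly. First I would observe that $\Sigma$ is wqo: it is the disjoint union of four chains, each order-isomorphic to a subset of the positive integers, so any antichain meets each chain in at most one element and thus has size at most four. Higman's Lemma then tells us that $\Sigma^\ast$ is wqo under the generalized subword order. Now take any infinite sequence $\pi_1,\pi_2,\dots$ of members of $\mathcal{W}$ and consider the corresponding sequence of encodings $w_{\pi_1},w_{\pi_2},\dots$ in $\Sigma^\ast$. Because $\Sigma^\ast$ is wqo, there are indices $i<j$ with $w_{\pi_i}\le w_{\pi_j}$; the reflection property then yields $\pi_i\le\pi_j$. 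Hence $\mathcal{W}$ admits no infinite antichain, and since the containment order has no infinite strictly descending chains, $\mathcal{W}$ is wqo.

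The substance of the proof, and the step I expect to require the most care, is establishing the reflection property. Because the order on $\Sigma$ is a union of four chains, a subword embedding matches each letter of $w_\sigma$ to a letter of $w_\pi$ lying in the \emph{same} quadrant and of no smaller index. The geometric content to verify is therefore twofold: that the building operation encoded by a single-point letter $\mathsf{NE}_1$ is subsumed by the ring operation $\mathsf{NE}_k$ of the same orientation (the first point of such a ring occupies exactly the quadrant where the single point would be placed, so $\alpha$ together with that first point already realizes $\alpha\oplus1$), and similarly for the other three quadrants; and that a ring of length $k$ is subsumed by a ring of length $\ell\ge k$ in the same orientation. I would then argue by induction on the length of $w_\pi$, reading both words from the innermost core outward (that is, left to right), that these per-letter dominations compose correctly: at each matched step the inner permutation built so far for $\sigma$ embeds into the one built for $\pi$, and the additional structure placed by $\pi$'s current letter only enlarges the ambient spiral without disturbing the already-embedded interior. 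The delicate point is to check that the letters of $w_\pi$ which the embedding \emph{skips} correspond to points lying strictly outside the current rectangular hull, so that the partial embedding extends unobstructed; this is precisely where the spiral geometry, rather than mere bookkeeping, is needed.
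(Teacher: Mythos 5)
Your proposal follows essentially the same route as the paper: fix a ring decomposition (Proposition~\ref{prop-ring-decomp}), encode each permutation as a word over $\Sigma$ ordered as a union of four chains, observe that generalized subword containment reflects permutation containment, and invoke Higman's Lemma. The paper simply asserts the reflection property as clear, whereas you flag it as the delicate step and sketch why per-letter domination and skipped letters behave correctly; that elaboration is accurate and consistent with the paper's construction.
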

%

Our next goal is to determine the minimal forbidden permutations for $\mathcal{W}$, which will later be essential for determining the minimal forbidden induced subgraphs of $G_{\mathcal{W}}$.

\begin{proposition}
\label{prop-W-basis}
The downward closure of the widdershins spirals is defined by the minimal forbidden permutations $2143$, $2413$, $3412$, $314562$, $412563$, $415632$, $431562$, $512364$, $512643$, $516432$, $541263$, $541632$, and $543162$.
%
%
\end{proposition}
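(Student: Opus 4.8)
The plan is to verify three things: that each of the thirteen listed permutations lies outside $\mathcal{W}$, that each is minimal with this property (every proper subpermutation lies in $\mathcal{W}$), and conversely that every permutation avoiding all thirteen belongs to $\mathcal{W}$. Since $\mathcal{W}$ is closed under the four rotations, so is its set of minimal forbidden permutations, and the thirteen candidates fall into five rotation-orbits, represented by $2143$ (together with $3412$), $2413$, $314562$, $412563$, and $415632$. Thus for the first two parts it suffices to argue about one representative of each orbit.

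For the forbidden direction, $2143$ and $3412$ are not skew-merged and hence, as every member of $\mathcal{W}$ is skew-merged, cannot lie in $\mathcal{W}$. Each of the three remaining representatives (and indeed $2413$) is easily checked to be both sum- and skew-indecomposable, so by the ring decomposition of Proposition~\ref{prop-ring-decomp} membership in $\mathcal{W}$ would force it to arise by central insertion into a widdershins spiral of length at least four. For a fixed length this leaves only finitely many possibilities---a spiral of length $k\ge 4$ with a central interval of size $n-k$---and one computes all of them (for $2413$, only $3142$; for length six, the spirals of length six together with the central insertions of a single point into a length-five spiral and of a $2$-point interval into a length-four spiral, in each of the four orientations) and checks that no representative appears. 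This shows all thirteen permutations are forbidden.

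Minimality is a finite verification: for each representative one deletes a single point in every possible way and exhibits the resulting shorter permutation inside a widdershins spiral, so that all of its proper subpermutations lie in $\mathcal{W}$. Because $\mathcal{W}$ is downward closed it suffices to treat the one-point deletions, and this same check simultaneously shows that none of the length-six permutations contains one of the length-four ones, so the thirteen form an antichain.

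The substance of the proposition is the completeness direction, which I would prove by induction on length. Permutations of length at most three all lie in $\mathcal{W}$. If $\pi\in\Av(2143,2413,3412,\dots)$ is sum decomposable then, since $\pi$ avoids $2143$, the argument recorded just before Proposition~\ref{prop-ring-decomp} (which uses only $\Av(2143)$) shows $\pi=1\oplus\alpha$ or $\pi=\alpha\oplus 1$; here $\alpha$ again avoids all thirteen patterns, so $\alpha\in\mathcal{W}$ by induction and hence $\pi\in\mathcal{W}$, and the skew decomposable case is symmetric via $\Av(3412)$. The remaining, and genuinely hard, case is when $\pi$ is simultaneously sum- and skew-indecomposable: here I must show $\pi$ is a central insertion into a widdershins spiral of length at least four. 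The plan is to use the partition of the skew-merged permutation $\pi$ into an increasing and a decreasing subsequence to locate its outermost extreme point---the outermost pin of the spiral being reconstructed---peel it off, induct on the smaller indecomposable permutation, and continue inward until only the central interval remains. The forbidden patterns are exactly what make this work: the presence of $2413$ or any of the ten length-six permutations is precisely the obstruction to the extreme points winding anti-clockwise, so their absence forces the spiral to be widdershins and lets the peeling proceed. The main obstacle is organizing this case analysis cleanly; in particular one must resist reading the spiral off the substitution-decomposition skeleton of $\pi$, since a rotated spiral can there masquerade as a shorter spiral with one inflated point, and instead peel extreme points in a way that respects the rotational orientation.
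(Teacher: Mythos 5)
Your overall skeleton---soundness and minimality by finite checks exploiting rotational symmetry, then completeness by induction on length with the sum- and skew-decomposable cases dispatched via avoidance of $2143$ and $3412$---matches the paper's proof. But there is a genuine gap at exactly the point you yourself flag as ``genuinely hard'': the case where $\pi$ avoids all thirteen patterns and is both sum- and skew-indecomposable. There you state a plan (locate the outermost extreme point of the skew-merged structure, peel it off, induct inward) and name an obstacle (organizing the case analysis; the danger of misreading the substitution decomposition), but you never carry the argument out. This case is the entire content of the proposition: it is the only place where the ten length-six basis elements do any work, and asserting that ``their absence forces the spiral to be widdershins'' is a restatement of what must be proved, not a proof of it.

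For comparison, the paper's resolution of this case is concrete. Since $\pi$ avoids $2413$, if it also avoided $3142$ it would be separable and hence sum or skew decomposable; so $\pi$ contains $3142$, and one chooses a copy with the $3$ as far west, the $1$ as far south, the $4$ as far north, and the $2$ as far east as possible. Extremality empties four cells of the resulting cell diagram, avoidance of $2143$ and $3412$ empties four more, nonempty corner cells force sum or skew decomposability, and the ten length-six patterns are then exactly what is needed to show that the remaining non-central cells contain at most one point in total; a short iteration of this one-point analysis shows $\pi$ is a central insertion into a widdershins spiral of length at least $4$, whose central interval is handled by induction. Something at this level of explicitness---a precise statement of which cells can be occupied after each peeling step, together with a verification that each of the ten patterns is the obstruction you claim---is what your plan still owes. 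A smaller omission: before peeling, your indecomposable case should dispose of the subcase in which $\pi$ avoids $3142$ (then $\pi$ is separable and indecomposable both ways, so $|\pi|=1$), since otherwise there is no outermost ring to locate.
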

\begin{proof}
Let $B$ denote the list of forbidden permutations stated in the proposition, and let $\C$ denote the permutation class that has $B$ as its set of minimal forbidden permutations. We aim to show that $\mathcal{W}=\C$. It can be seen by inspection that no element of $B$ lies in $\mathcal{W}$, so we may conclude that $\mathcal{W}$ is contained in $\C$. We prove by induction on the length of $\pi$ that every $\pi\in\C$ lies in $\mathcal{W}$. The claim is clearly true for $|\pi|\le 2$, so now consider a longer permutation $\pi\in\C$. Note that $B$, and hence also $\C$ (whose minimal forbidden permutations are precisely $B$), are closed under rotations by $90^\circ$.

First suppose that $\pi$ is sum or skew decomposable. As both cases are equivalent up to rotational symmetry, we may assume that $\pi=\sigma\oplus\tau$ for nonempty permutations $\sigma$ and $\tau$. Because $\pi$ avoids $21\oplus 21=2143$, at most one of these components may contain $21$, and thus the other must be increasing. By induction, we may conclude that $\pi$ lies in the downward closure of the widdershins spirals, as desired.

\begin{figure}  
\begin{center}
	\begin{tikzpicture}[scale=0.5, baseline=(current bounding box.center)]
		\draw [darkgray, fill=lightgray] (0,2) rectangle (1,4);
		\draw [darkgray, fill=lightgray] (1,0) rectangle (3,1);
		\draw [darkgray, fill=lightgray] (2,4) rectangle (4,5);
		\draw [darkgray, fill=lightgray] (4,1) rectangle (5,3);
		\draw [darkgray, pattern=north west lines, pattern color=darkgray] (0,1) rectangle (2,2);
		\draw [darkgray, pattern=north west lines, pattern color=darkgray] (1,3) rectangle (2,5);
		\draw [darkgray, pattern=north west lines, pattern color=darkgray] (3,0) rectangle (4,2);
		\draw [darkgray, pattern=north west lines, pattern color=darkgray] (3,3) rectangle (5,4);
		\foreach \i in {0,...,5} {
			\draw [darkgray, thick, line cap=round] (0,\i)--(5,\i);
			\draw [darkgray, thick, line cap=round] (\i,0)--(\i,5);
		}
		\plotperm{3,1,4,2};
		\node[rotate=45] at (0.55,0.55) {$\dots$};
		\node[rotate=135] at (4.45,0.55) {$\dots$};
		\node[rotate=45] at (4.5,4.55) {$\dots$};
		\node[rotate=135] at (0.5,4.55) {$\dots$};
	\end{tikzpicture}
\quad\quad\quad
	\begin{tikzpicture}[scale=0.5, baseline=(current bounding box.center)]
		\draw [darkgray, fill=lightgray] (0,2) rectangle (1,4);
		\draw [darkgray, fill=lightgray] (1,0) rectangle (3,1);
		\draw [darkgray, fill=lightgray] (2,4) rectangle (4,5);
		\draw [darkgray, fill=lightgray] (4,1) rectangle (5,3);
		\draw [darkgray, fill=lightgray] (0,1) rectangle (2,2);
		\draw [darkgray, fill=lightgray] (1,3) rectangle (2,5);
		\draw [darkgray, fill=lightgray] (3,0) rectangle (4,2);
		\draw [darkgray, fill=lightgray] (3,3) rectangle (5,4);
		\draw [darkgray, pattern=north west lines, pattern color=darkgray] (1,2) rectangle (2,3);
		\draw [darkgray, pattern=north west lines, pattern color=darkgray] (2,1) rectangle (3,2);
		\draw [darkgray, pattern=north west lines, pattern color=darkgray] (2,3) rectangle (3,4);
		\draw [darkgray, pattern=north west lines, pattern color=darkgray] (3,2) rectangle (4,3);
		\foreach \i in {0,...,5} {
			\draw [darkgray, thick, line cap=round] (0,\i)--(5,\i);
			\draw [darkgray, thick, line cap=round] (\i,0)--(\i,5);
		}
		\plotperm{3,1,4,2};
		\draw [darkgray, fill=lightgray] (0,0) rectangle (1,1);
		\draw [darkgray, fill=lightgray] (0,4) rectangle (1,5);
		\draw [darkgray, fill=lightgray] (4,0) rectangle (5,1);
		\draw [darkgray, fill=lightgray] (4,4) rectangle (5,5);
	\end{tikzpicture}
\quad\quad\quad
	\begin{tikzpicture}[scale=0.5, baseline=(current bounding box.center)]
		\draw [darkgray, fill=lightgray] (0,2) rectangle (1,4);
		\draw [darkgray, fill=lightgray] (1,0) rectangle (3,1);
		\draw [darkgray, fill=lightgray] (2,4) rectangle (4,5);
		\draw [darkgray, fill=lightgray] (4,1) rectangle (5,3);
		\draw [darkgray, fill=lightgray] (0,1) rectangle (2,2);
		\draw [darkgray, fill=lightgray] (1,3) rectangle (2,5);
		\draw [darkgray, fill=lightgray] (3,0) rectangle (4,2);
		\draw [darkgray, fill=lightgray] (3,3) rectangle (5,4);
		\draw [darkgray, fill=lightgray] (1,2) rectangle (2,3);
		\draw [darkgray, fill=lightgray] (2,1) rectangle (3,2);
		\draw [darkgray, fill=lightgray] (3,2) rectangle (4,3);
		\draw [darkgray, fill=lightgray] (2,3) rectangle (3,4);
		\draw [darkgray, pattern=north west lines, pattern color=darkgray] (2.5,2) rectangle (3,3);
		\foreach \i in {0,...,5} {
			\draw [darkgray, thick, line cap=round] (0,\i)--(5,\i);
			\draw [darkgray, thick, line cap=round] (\i,0)--(\i,5);
		}
		\plotperm{3,1,4,2};
		\draw (2.5,2)--(2.5,3.5);
		\absdot{(2.5,3.5)}{};
		\draw [darkgray, fill=lightgray] (0,0) rectangle (1,1);
		\draw [darkgray, fill=lightgray] (0,4) rectangle (1,5);
		\draw [darkgray, fill=lightgray] (4,0) rectangle (5,1);
		\draw [darkgray, fill=lightgray] (4,4) rectangle (5,5);
	\end{tikzpicture}
\quad\quad\quad
	\begin{tikzpicture}[scale=0.5, baseline=(current bounding box.center)]
		\draw [darkgray, fill=lightgray] (0,2) rectangle (1,4);
		\draw [darkgray, fill=lightgray] (1,0) rectangle (3,1);
		\draw [darkgray, fill=lightgray] (2,4) rectangle (4,5);
		\draw [darkgray, fill=lightgray] (4,1) rectangle (5,3);
		\draw [darkgray, fill=lightgray] (0,1) rectangle (2,2);
		\draw [darkgray, fill=lightgray] (1,3) rectangle (2,5);
		\draw [darkgray, fill=lightgray] (3,0) rectangle (4,2);
		\draw [darkgray, fill=lightgray] (3,3) rectangle (5,4);
		\draw [darkgray, fill=lightgray] (1,2) rectangle (2,3);
		\draw [darkgray, fill=lightgray] (2,1) rectangle (3,2);
		\draw [darkgray, fill=lightgray] (3,2) rectangle (4,3);
		\draw [darkgray, fill=lightgray] (2,3) rectangle (3,4);
		\draw [darkgray, fill=lightgray] (2.5,2) rectangle (3,3);
		\draw [darkgray, pattern=north west lines, pattern color=darkgray] (2,2) rectangle (2.5,2.5);
		\foreach \i in {0,...,5} {
			\draw [darkgray, thick, line cap=round] (0,\i)--(5,\i);
			\draw [darkgray, thick, line cap=round] (\i,0)--(\i,5);
		}
		\plotperm{3,1,4,2};
		\draw (2.5,2)--(2.5,3.5);
		\absdot{(2.5,3.5)}{};
		\draw (2.75,2.5)--(2,2.5);
		\absdot{(2.75,2.5)}{};
		\draw [darkgray, fill=lightgray] (0,0) rectangle (1,1);
		\draw [darkgray, fill=lightgray] (0,4) rectangle (1,5);
		\draw [darkgray, fill=lightgray] (4,0) rectangle (5,1);
		\draw [darkgray, fill=lightgray] (4,4) rectangle (5,5);
	\end{tikzpicture}
\end{center}
\caption{The analysis arising in the proof of Proposition~\ref{prop-W-basis}.}
\label{fig-cell-diagram}
\end{figure}
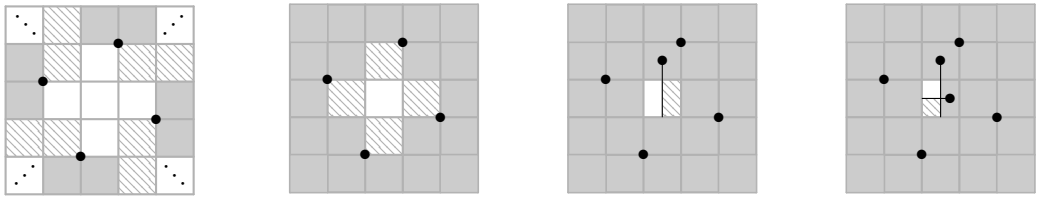

If $\pi$ avoids $3142$, then it is separable (because it also avoids $2413\in B$), and thus $\pi$ must be sum or skew decomposable, and both cases are handled by the argument above. It remains to consider the case where $\pi$ contains $3142$. Among all copies of $3142$ in $\pi$, choose one with the $3$ as far west as possible, the $1$ as far south as possible, the $4$ as far north as possible, and the $2$ as far east as possible. These choices ensure that no points lie in the shaded cells of the first panel of Figure~\ref{fig-cell-diagram}. Moreover, as $\pi$ avoids $2143$ and $3412$, no points may lie in the hatched cells of the same panel of this figure. Finally, note that by our choice of the copy of $3142$ to examine, if $\pi$ has any entries in the four corners labelled by ellipses in the first panel of Figure~\ref{fig-cell-diagram} then these entries are themselves order isomorphic a separable permutation. Thus if these cells are nonempty, we are done by induction and our previous argument regarding sum or skew decomposable permutations.

We may now move to the second panel of Figure~\ref{fig-cell-diagram}. The hatched cells of this panel may contain at most one entry in total, as otherwise, if $\pi$ contained two points in the same hatched cell then $\pi$ would contain $2143$, $3412$, $543162$, $512364$, $314562$, or $516432$ (all members of $B$), while if $\pi$ contained two points in different hatched cells then $\pi$ would contain $412563$, $415632$, $431562$, $512643$, $541263$, or $541632$ (also members of $B$).

If $\pi$ does not contain any points in the hatched cells of the second panel of Figure~\ref{fig-cell-diagram}, then $\pi$ is obtained by central inflation of a widdershins spiral, and we are done by induction. Thus we may assume that $\pi$ contains precisely one point in these hatched cells, and by rotational symmetry, we may suppose that this point lies in the northern-most hatched cell. This leads us to the situation displayed in the third panel of Figure~\ref{fig-cell-diagram}; note that $\pi$ can contain at most one point in the hatched cell of this diagram because otherwise it would contain $3412$ or $516432$. If $\pi$ contains no points in this hatched cell, then we are done by induction. If $\pi$ does contain a point in this hatched cell, we are left in the situation shown in the fourth and final panel of Figure~\ref{fig-cell-diagram}. This process cannot continue indefinitely, and when it terminates, we may conclude by induction that $\pi\in\mathcal{W}$.
\end{proof}

%
%
%

Recall from our discussion in Section~\ref{sec-perm-graphs} that the class of permutation graphs corresponding to $\mathcal{W}$ also contains the permutation graphs of members of the classes $\mathcal{W}^{-1}$, $\mathcal{W}^{\text{rc}}$, and $(\mathcal{W}^{\text{rc}})^{-1}$. Since $\mathcal{W}$ (and thus also $\mathcal{W}^{-1}$) is closed under rotation by $90^{\circ}$, and the reverse-complement operation is the same as a rotation by $180^\circ$, we see that $\mathcal{W}^{\text{rc}}=\mathcal{W}$ and $(\mathcal{W}^{\text{rc}})^{-1}=\mathcal{W}^{-1}$. Therefore we have
\[
	G_{\mathcal{W}} = G_{\mathcal{W}\cup\mathcal{W}^{-1}}.
\]
Thus instead of the minimal forbidden permutations of $\mathcal{W}$, we are more interested in the minimal forbidden permutations of $\mathcal{W}\cup\mathcal{W}^{-1}$. It is straight-forward to compute the minimal forbidden permutations of $\mathcal{W}^{-1}$ using Proposition~\ref{prop-W-basis}. Moreover, one of the results of Atkinson's seminal work on permutation patterns~\cite{atkinson:restricted-perm:} describes how to compute the minimal forbidden permutations of a union of two permutation classes. In our context, because the minimal forbidden permutations for both $\mathcal{W}$ and $\mathcal{W}^{-1}$ have length at most $6$, Atkinson's result shows that to compute the minimal forbidden permutations of $\mathcal{W}\cup\mathcal{W}^{-1}$ we may restrict our attention to permutations of length at most $12$. A computer search then yields the following result. 

\begin{corollary}
\label{cor-WcupWinv-basis}
The class $\mathcal{W}\cup\mathcal{W}^{-1}$ is defined by the minimal forbidden permutations
$2143$, 
$3412$, 
$234615$, 
$236145$, 
$236514$, 
$261345$, 
$265134$, 
$265413$, 
$314562$, 
$346215$, 
$362145$, 
$365214$, 
$412563$, 
$415632$, 
$431562$, 
$463215$, 
$512364$, 
$512643$, 
$516432$, 
$541263$, 
$541632$, 
$543162$, 
$28536417$, and 
$71463582$.
\end{corollary}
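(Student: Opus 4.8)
The plan is to leverage the symmetry between $\mathcal{W}$ and $\mathcal{W}^{-1}$ together with Atkinson's description of the basis of a union of classes. First I would record that the group-theoretic inverse, being the reflection of the plot about the line $y=x$, is an automorphism of the containment order: $\sigma\le\pi$ if and only if $\sigma^{-1}\le\pi^{-1}$. Writing $B$ for the thirteen-element basis of $\mathcal{W}$ supplied by Proposition~\ref{prop-W-basis}, it follows at once that $\mathcal{W}^{-1}=\Av(B^{-1})$, where $B^{-1}=\{\beta^{-1}:\beta\in B\}$, and that $B^{-1}$ is again an antichain and hence is precisely the basis of $\mathcal{W}^{-1}$. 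Inverting the members of $B$ one at a time is a routine calculation: the involutions $2143$ and $3412$ are fixed, the pair $2413$ and $3142$ are interchanged (so $3142\in\mathcal{W}$, and neither $2413$ nor $3142$ is forbidden in the union), and the ten length-six members of $B$ invert to the ten length-six permutations $234615$, $236145$, $236514$, $261345$, $265134$, $265413$, $346215$, $362145$, $365214$, and $463215$.

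Next I would invoke Atkinson's result on unions of permutation classes~\cite{atkinson:restricted-perm:}. A permutation $\pi$ fails to lie in $\mathcal{W}\cup\mathcal{W}^{-1}$ exactly when it simultaneously contains some $\beta\in B$ and some $\gamma\in B^{-1}$. If $\pi$ is a minimal such permutation, fix an embedded copy of such a $\beta$ and an embedded copy of such a $\gamma$ inside it; any point of $\pi$ lying outside both copies could be deleted without destroying either containment, so by minimality $\pi$ has no such points. Hence $|\pi|\le|\beta|+|\gamma|$. Since every member of $B$, and therefore of $B^{-1}$, has length at most $6$, every minimal forbidden permutation of $\mathcal{W}\cup\mathcal{W}^{-1}$ has length at most $12$, which reduces the determination of the basis to a finite problem.

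Finally I would carry out that finite search. Rather than enumerating all of the roughly $12!$ permutations of length at most $12$, the bound above suggests a far smaller candidate set: for each ordered pair $(\beta,\gamma)\in B\times B^{-1}$, overlay a copy of $\beta$ and a copy of $\gamma$ in every way consistent with the relative orders of their points, record the resulting permutations (all of length at most $12$), and then extract the minimal elements of this finite set under containment. One must then verify that each surviving permutation genuinely contains a member of $B$ and a member of $B^{-1}$ while every one of its proper subpermutations lies in $\mathcal{W}\cup\mathcal{W}^{-1}$, and that no candidate has been overlooked. This is where the real work lies: the obstacle is not conceptual but rather the bookkeeping of the overlay enumeration and the confirmation of both minimality and completeness of the resulting list of twenty-four permutations. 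As a sanity check, twenty-two of these are exactly the members of $B\cup B^{-1}$ other than $2413$ and $3142$, so the only genuinely new minimal forbidden permutations are the two length-eight spirals $28536417$ and $71463582$, each of which arises from merging a length-six member of $B$ with a length-six member of $B^{-1}$.
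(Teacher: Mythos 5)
Your proposal follows the paper's argument essentially verbatim: invert the basis of $\mathcal{W}$ from Proposition~\ref{prop-W-basis} to get the basis of $\mathcal{W}^{-1}$, invoke Atkinson's result on unions of classes to bound the minimal forbidden permutations of $\mathcal{W}\cup\mathcal{W}^{-1}$ by length $12$, and finish with a finite computer search, which the paper likewise delegates to a computation (credited to Pantone) rather than carrying out in the text. Your justification of the length-$12$ bound and your overlay-based organization of the search are somewhat more explicit than the paper's one-line appeal to Atkinson, but the approach is the same.
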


%

%
%
%
%
%

\section{The Counterexample to Conjecture~\ref{false-conj-lwqo}}

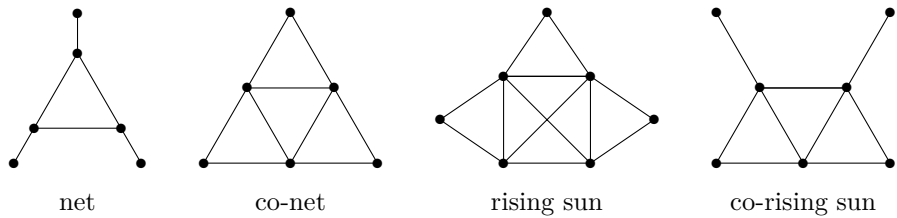
\begin{figure}
\begin{center}
\begin{tabular}{cccc}
	\begin{tikzpicture}[scale=1.15485, yscale=-1]
		\plotpartialperm{0/0, -0.5/0.866, 0.5/0.866, 0/-0.464, -0.732/1.267824, 0.732/1.267824};
		\draw (0,0)--(-0.5,0.866)--(0.5,0.866)--cycle;
		\draw (-0.5,0.866)--(-0.732,1.267824);
		\draw (0.5,0.866)--(0.732,1.267824);
		\draw (0,-0.464)--(0,0);
	\end{tikzpicture}
	&
	\begin{tikzpicture}[scale=1.15485]
		\plotpartialperm{-1/0, 0/0, 1/0, -0.5/0.866, 0.5/0.866, 0/1.732};
		\draw (-1,0)--(0,1.732)--(1,0)--cycle;
		\draw (0,0)--(-0.5,0.866)--(0.5,0.866)--cycle;
	\end{tikzpicture}
	&
	\begin{tikzpicture}[scale=1.15485]
		\plotpartialperm{0/0, 1/0, 1/1, 0/1, 0.5/1.732, -0.732/0.5, 1.732/0.5};
		\draw (0,0)--(1,0)--(1,1)--(0,1)--(0,0)--(1,1)--(0,1)--(1,0);
		\draw (0,0)--(-0.732,0.5)--(0,1)--(0.5,1.732)--(1,1)--(1.732,0.5)--(1,0);
	\end{tikzpicture}
	&
	\begin{tikzpicture}[scale=1.15485]
		\plotpartialperm{-1/0, 0/0, 1/0, -0.5/0.866, 0.5/0.866, 1/1.732, -1/1.732};
		\draw (0,0)--(-0.5,0.866)--(0.5,0.866)--cycle;
		\draw (-1,0)--(1,0)--(0.5,0.866)--(-0.5,0.866)--(-1,0);
		\draw (0.5,0.866)--(1,1.732);
		\draw (-0.5,0.866)--(-1,1.732);
	\end{tikzpicture}
\\
	net
	&
	co-net
	&
	rising sun
	&
	co-rising sun
\end{tabular}
\end{center}
\caption{In addition to $2K_2$, $C_4$, $C_5$, the forbidden induced subgraphs for the split permutation graphs.}
\label{fig-split-perm-basis}
\end{figure}

We now turn our attention to the permutation graphs in $G_{\mathcal{W}}$, seeking first to determine the (finite list of) minimal forbidden induced subgraphs for this class, and then to show that it is not lwqo. Throughout this section we use the graph naming conventions from the \emph{Information System on Graph Classes and their Inclusions}~\cite{information-system-on-graph-classes-and-their-inclusions:15:}.

F\"oldes and Hammer~\cite{foldes:split-graphs-ha:} established the following result in 1977. The more exotic graphs involved in the statements of both Theorem~\ref{thm-split-compare-basis} and Corollary~\ref{cor-split-perm-graph-basis} are depicted in Figure~\ref{fig-split-perm-basis}.

\begin{theorem}[F\"oldes and Hammer~\cite{foldes:split-graphs-ha:}]
\label{thm-split-compare-basis}
The class of split comparability graphs is defined by the minimal forbidden induced subgraphs $2K_2$, $C_4$, $C_5$, net, co-net, and co-rising sun.
\end{theorem}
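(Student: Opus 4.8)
The plan is to prove both implications of the characterization. For \emph{necessity}, I would check directly that none of the six graphs is a split comparability graph, and that each is minimal with this property. The graphs $2K_2$, $C_4$, and $C_5$ are not split at all, by the F\"oldes--Hammer characterization of split graphs quoted in Section~\ref{sec-perm-graphs}. The net, co-net, and co-rising sun are all split (the net and the rising sun visibly admit clique/independent-set partitions, and the class of split graphs is closed under complementation), so for these three it remains to show they admit no transitive orientation; this is a finite computation, and minimality (that deleting any vertex leaves a split comparability graph) is likewise a finite check.

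The substance is the \emph{sufficiency} direction: if $G$ contains none of the six, then $G$ is a split comparability graph. Since $G$ avoids $2K_2$, $C_4$, and $C_5$, F\"oldes--Hammer supplies a partition of the vertex set into a clique $K$ and an independent set $I$; I would fix such a partition and, after possibly moving vertices between the parts, assume that no vertex of $I$ is adjacent to all of $K$. The key reduction is to describe the transitive orientations of a split graph explicitly. In any transitive orientation the restriction to $K$ is a transitive tournament, hence a linear order $\prec$ on $K$; and for each $u\in I$ transitivity forces the in-neighbours of $u$ in $K$ to form an initial segment $D_u$ of $(K,\prec)$ and the out-neighbours to form a final segment $U_u$, which is possible precisely when the non-neighbourhood $K\setminus N_K(u)$ is an interval of $\prec$. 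It follows that $G$ is a comparability graph if and only if there is a linear order $\prec$ on $K$ such that (i) for every $u\in I$ the set $K\setminus N_K(u)$ is an interval of $\prec$, and (ii) for all distinct $u,u'\in I$ one has $U_u\cap D_{u'}=\emptyset$, since a common element $v\in U_u\cap D_{u'}$ would force $u\prec v\prec u'$ and thereby make the non-adjacent pair $u,u'$ comparable.

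With this reformulation in hand, the goal becomes to show that avoiding the net, co-net, and co-rising sun guarantees a linear order satisfying (i) and (ii). Each forbidden graph is a local obstruction: a net has clique a triangle with the three independent vertices missing the three distinct pairs, and in any order of a triangle one pair is non-contiguous, so (i) fails; while for the co-net and the co-rising sun one checks that every ordering of the clique produces independent vertices $u,u'$ with $U_u\cap D_{u'}\neq\emptyset$, so (ii) fails. (As a consistency check, the rising sun---conspicuously absent from the list---does admit a compatible order, for instance ordering its four clique vertices so that each of the three missing pairs becomes consecutive.) I would therefore assume $G$ admits no valid order, pass to a minimal sub-configuration witnessing the failure of (i) or (ii), and argue that such a witness realises one of the three graphs.

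The main obstacle is exactly this completeness step: organising the case analysis so that \emph{every} obstruction to (i) or to (ii) is shown to contain a net, a co-net, or a co-rising sun as an induced subgraph. A cleaner route to the same conclusion replaces the hand-built order by Golumbic's forcing relation $\Gamma$ on oriented edges, whose implication classes detect comparability: one shows that a self-contradictory implication class in a split graph forces one of the three exotic subgraphs, the split structure keeping the forcing chains short enough to enumerate. Either way, the enumeration of obstructions is the crux, while the remaining verifications are routine.
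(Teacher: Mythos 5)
First, a point of comparison: the paper does not prove this statement at all --- Theorem~\ref{thm-split-compare-basis} is quoted from F\"oldes and Hammer's 1977 paper with a citation, so there is no internal proof to measure yours against. Judged on its own terms, your reduction is correct and is a sensible way to attack the theorem: after normalising the split partition so that no vertex of $I$ sees all of $K$, a transitive orientation restricted to $K$ is a linear order $\prec$, each $u\in I$ splits $N_K(u)$ into an initial segment $D_u$ and a final segment $U_u$ separated by the interval $K\setminus N_K(u)$, and conversely orienting $K$ by $\prec$ and the $I$--$K$ edges by these splits is transitive exactly when your conditions (i) and (ii) hold. Your spot checks are also right: the net violates (i) for every order of its triangle (three distinct $2$-subsets of a $3$-set cannot all be intervals), the co-net and co-rising sun violate (ii) for every admissible order, and the rising sun does admit a compatible order.

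The genuine gap is the one you name yourself: the entire content of the theorem is the completeness claim that \emph{every} split graph admitting no order satisfying (i) and (ii) contains a net, co-net, or co-rising sun, and the proposal stops exactly there. ``Pass to a minimal sub-configuration witnessing the failure of (i) or (ii) and argue that such a witness realises one of the three graphs'' is a restatement of the theorem, not an argument for it, and the alternative route via Golumbic's $\Gamma$-classes is likewise asserted (``the forcing chains are short enough to enumerate'') without the enumeration. Concretely, two implications remain to be proved: that if no linear order makes all the sets $K\setminus N_K(u)$ simultaneously intervals, then some three clique vertices together with witnesses in $I$ span a net (a consecutive-ones--type analysis); and that if every order satisfying (i) violates (ii), then a minimal violating pair $u,u'$ together with the relevant clique vertices spans a co-net or a co-rising sun. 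Neither is routine --- the second in particular requires controlling how the choice of $\prec$ interacts with several independent vertices at once --- and until they are written out, what you have establishes only the easy necessity half of the statement.
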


It follows that the class of split co-comparability graphs is thus defined by the minimal forbidden subgraphs $2K_2$, $C_4$, $C_5$, net, co-net, and rising sun. As the class of permutation graphs is the intersection of the classes of comparability and co-comparability graphs (Dushnik and Miller~\cite{dushnik:partially-order:}), we obtain the minimal forbidden induced subgraph characterization of the split permutation graphs.

\begin{corollary}
\label{cor-split-perm-graph-basis}
The class of split permutation graphs is defined by the minimal forbidden induced subgraphs $2K_2$, $C_4$, $C_5$, net, co-net, rising sun and co-rising sun.
\end{corollary}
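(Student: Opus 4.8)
The plan is to realize the split permutation graphs as the intersection of two classes whose bases are already in hand, and then to read off the basis of the intersection. By the theorem of Dushnik and Miller~\cite{dushnik:partially-order:}, a graph is a permutation graph exactly when it is both a comparability graph and a co-comparability graph. Intersecting each side with the class of split graphs gives
\[
	\text{(split permutation graphs)}
	=
	\text{(split comparability graphs)}
	\cap
	\text{(split co-comparability graphs)},
\]
since a split graph is a permutation graph if and only if it is simultaneously a comparability graph and a co-comparability graph.

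For the first class on the right I would quote Theorem~\ref{thm-split-compare-basis}, and for the second I would use the basis obtained from it by complementation: the split graphs are closed under complementation, and $G$ is a co-comparability graph precisely when $\overline{G}$ is a comparability graph, so $G$ is a split co-comparability graph if and only if $\overline{G}$ is a split comparability graph. Complementing the six forbidden subgraphs of Theorem~\ref{thm-split-compare-basis} (using $\overline{2K_2}=C_4$, $\overline{C_5}=C_5$, that net and co-net are complementary, and that the rising sun and co-rising sun are complementary) yields the basis $2K_2$, $C_4$, $C_5$, net, co-net, rising sun for the split co-comparability graphs.

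Since a graph lies in the intersection of two hereditary classes if and only if it avoids every forbidden subgraph of each, the split permutation graphs are exactly the graphs avoiding every member of the union $\{2K_2,\ C_4,\ C_5,\ \text{net},\ \text{co-net},\ \text{rising sun},\ \text{co-rising sun}\}$. The only remaining point, and the one step carrying any content, is to confirm that this seven-element set is an antichain under the induced subgraph order, so that each member really is a minimal forbidden subgraph. The first five already form an antichain, being the common portion of the two bases; the rising sun and co-rising sun each have seven vertices, so neither is a proper induced subgraph of the other; and since the rising sun is minimal in the second basis while the co-rising sun is minimal in the first, neither of them contains any of the five smaller graphs. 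This comparison of orders is immediate, so the expected obstacle is merely this antichain check rather than any genuinely delicate computation, and the argument is complete.
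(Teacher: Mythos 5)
Your proposal is correct and follows essentially the same route as the paper: complement the F\"oldes--Hammer basis of Theorem~\ref{thm-split-compare-basis} to obtain the basis of split co-comparability graphs, then intersect the two classes via Dushnik--Miller and take the minimal elements of the union of the two bases. The paper states this only in a brief remark, so your explicit antichain check (that neither sun graph contains any of the five smaller forbidden graphs, which follows from their minimality in their respective bases) is a welcome, if routine, addition.
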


Corollary~\ref{cor-split-perm-graph-basis} also follows from the 1985 work of Benzaken, Hammer, and de Werra~\cite{benzaken:split-graphs-of:}, as described in Brandst\"adt, Le, and Spinrad~\cite[Theorem 7.1.2]{brandstadt:graph-classes:-:}. 

Split permutation graphs have themselves received some attention relating to wqo. Korpelainen, Lozin, and Mayhill~\cite{korpelainen:split-permutati:} established that this class contains an (unlabelled) infinite antichain, and Atminas, Brignall, Lozin, and Stacho~\cite{atminas:minimal-classes:} showed that it contains a \emph{canonical} labelled infinite antichain. This means that a subclass of the split permutation graphs is lwqo if and only it has finite intersection with the (unlabelled) graphs contained in this antichain. 

We can now characterize the minimal forbidden subgraphs of $G_\mathcal{W}$.

\begin{figure}
\begin{center}
\begin{tabular}{ccccccc}
	\begin{tikzpicture}[xscale=0.5, yscale=1]
		\plotpartialperm{-1/-1, -1/0, -1/1, 1/-1, 1/0, 1/1};
		\draw (-1,-1)--(-1,1);
		\draw (1,-1)--(1,1);
		\draw (-1,0)--(1,0);
	\end{tikzpicture}
	&
	\begin{tikzpicture}[xscale=0.5, yscale=1]
		\plotpartialperm{0/-1, -1/-0.5, -1/0.5, 1/-0.5, 1/0.5, 0/1};
		\draw (-1,-0.5)--(0,-1)--(1,-0.5)--cycle;
		\draw (-1,0.5)--(0,1)--(1,0.5)--cycle;
		\draw (-1,-0.5)--(1,-0.5)--(1,0.5)--(-1,0.5)--cycle;
		\draw (-1,-0.5)--(1,0.5);
		\draw (1,-0.5)--(-1,0.5);
	\end{tikzpicture}
	&
	\begin{tikzpicture}[xscale=0.5, yscale=1]
		\plotpartialperm{-1/0.33333, 0/0.33333, 1/0.33333, 0/-1, 0/-0.33333, 0/0.33333, 0/1};
		\draw (-1,0.33333)--(1,0.33333);
		\draw (0,-1)--(0,1);
	\end{tikzpicture}
	&
	\begin{tikzpicture}[xscale=0.5, yscale=1]
		\plotpartialperm{-1/1, 1/1, 0/-1, 0/-0.5, 0/0, 0/0.5};
		\draw (-1,1)--(1,1);
		\draw (-1,1)--(0,0.5)--(1,1);
		\draw (-1,1)--(0,0)--(1,1);
		\draw (-1,1)--(0,-0.5)--(1,1);
		\draw (0,-1)--(0,0.5);
	\end{tikzpicture}
	&
	\begin{tikzpicture}[xscale=0.5, yscale=1]
		\plotpartialperm{-1/-1, 1/-1, 0/-0.33333, -1/0.33333, 1/0.33333, 0/1};
		\draw (-1,-1)--(0,-0.33333)--(-1,0.33333);
		\draw (1,-1)--(0,-0.33333)--(1,0.33333);
		\draw (-1,0.33333)--(1,0.33333)--(0,1)--cycle;
	\end{tikzpicture}
	&
	\begin{tikzpicture}[xscale=0.5, yscale=1]
		\plotpartialperm{-1/1, 1/1, 0/-1, 0/-0.5, 0/0, 0/0.5};
		\draw (-1,1)--(1,1);
		\draw (-1,1)--(0,0.5)--(1,1);
		\draw (-1,1)--(0,0)--(1,1);
		\draw (-1,1)--(0,-0.5)--(1,1);
		\draw (0,-1)--(0,-0.5);
	\end{tikzpicture}
	&
	\begin{tikzpicture}[xscale=0.5, yscale=1]
		\plotpartialperm{0/-1, 0/-0.5, 0/0.5, 0/1, -1/0, -0.5/0, 0.5/0, 1/0};
		\draw (0,-1)--(0,1);
		\draw (-1,0)--(1,0);
		\draw (0,-0.5)--(-1,0);
		\draw (0,-0.5)--(-0.5,0);
		\draw (0,-0.5)--(0.5,0);
		\draw (0,-0.5)--(1,0);
		\draw (0,0.5)--(-1,0);
		\draw (0,0.5)--(-0.5,0);
		\draw (0,0.5)--(0.5,0);
		\draw (0,0.5)--(1,0);
	\end{tikzpicture}
\\
	$H$
	&
	$\overline{H}$
	&
	cross
	&
	co-cross
	&
	$X_{168}$
	&
	$\overline{X_{168}}$
	&
	$X_{160}\cong \overline{X_{160}}$
\end{tabular}
\end{center}
\caption{In addition to $2K_2$, $C_4$, $C_5$, net, co-net, rising sun, and co-rising sun, the forbidden induced subgraphs for $G_{\mathcal{W}}$.}
\label{fig-W-graphs-basis}
\end{figure}
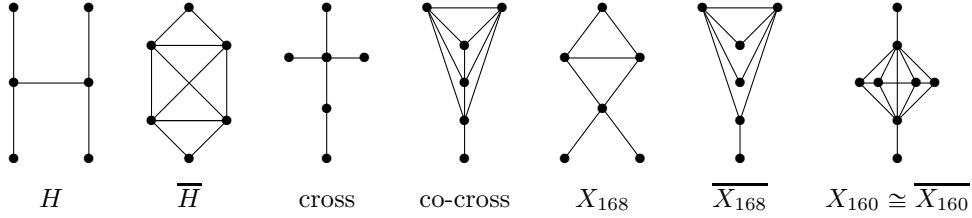

\begin{proposition}
\label{prop-basis-GW}
The class $G_{\mathcal{W}}$ is defined by the minimal forbidden induced subgraphs $2K_2$, $C_4$, $C_5$, net, co-net, rising sun, co-rising sun, $H$, $\overline{H}$, cross, co-cross, $X_{168}$, $\overline{X_{168}}$, and $X_{160}$.
\end{proposition}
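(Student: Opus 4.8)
The plan is to prove the two inclusions $G_{\mathcal{W}}\subseteq\Av(L)$ and $\Av(L)\subseteq G_{\mathcal{W}}$, where $L$ is the $14$-graph list in the statement and $\Av(L)$ denotes the class of graphs with no induced subgraph isomorphic to a member of $L$. Throughout I would exploit the identity $G_{\mathcal{W}}=G_{\mathcal{W}\cup\mathcal{W}^{-1}}$ established above, so that the graphs in question are exactly the $G_\pi$ with $\pi\in\mathcal{W}\cup\mathcal{W}^{-1}$. Since every widdershins spiral is skew-merged, $\mathcal{W}\cup\mathcal{W}^{-1}$ consists of skew-merged permutations, and in a skew-merged permutation the decreasing entries form a clique and the increasing entries an independent set of $G_\pi$; hence $G_{\mathcal{W}}$ is a class of split permutation graphs. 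A converse holds for single graphs: if a split permutation graph $F$ equals $G_\sigma$, then any split partition of $F$ exhibits $\sigma$ as a merge of a decreasing and an increasing subsequence, so \emph{every} permutation realizing a split permutation graph is itself skew-merged. This is what keeps the fibres of $\pi\mapsto G_\pi$ under control. Finally I would record that the basis of $\mathcal{W}\cup\mathcal{W}^{-1}$ from Corollary~\ref{cor-WcupWinv-basis} divides into the skew-merged basis $\{2143,3412\}$ together with twenty-two longer patterns (twenty of length $6$ and two of length $8$), each of which is itself skew-merged, since a minimal forbidden permutation cannot contain the shorter basis elements $2143$ or $3412$.

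For the forward inclusion it suffices, as $G_{\mathcal{W}}$ is closed under induced subgraphs, to verify that each of the $14$ graphs fails to lie in $G_{\mathcal{W}}$. The graphs $2K_2$, $C_4$, $C_5$, net, co-net, rising sun, and co-rising sun are excluded at once, being exactly the obstructions to the containing class of split permutation graphs by Corollary~\ref{cor-split-perm-graph-basis}. For the seven remaining graphs of Figure~\ref{fig-W-graphs-basis} I would first identify each as $G_\beta$ for one of the twenty-two longer basis permutations $\beta$: the graph-preserving symmetries $\pi\mapsto\pi^{-1}$, $\pi\mapsto\pi^{\text{rc}}$, $\pi\mapsto(\pi^{\text{rc}})^{-1}$ collapse the twenty patterns of length $6$ onto $H$, $\overline{H}$, cross, co-cross, $X_{168}$, $\overline{X_{168}}$ and the two patterns of length $8$ onto the self-complementary $X_{160}$. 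To conclude $G_\beta\notin G_{\mathcal{W}}$ I must rule out that some \emph{other} permutation realizing the same graph lies in $\mathcal{W}\cup\mathcal{W}^{-1}$; by the paragraph above every realization is skew-merged, hence there are only finitely many, and a direct (computer-assisted) enumeration confirms that each realization contains one of the twenty-two longer patterns and is therefore excluded from $\mathcal{W}\cup\mathcal{W}^{-1}$.

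The reverse inclusion is then short. Let $G$ avoid every member of $L$. Avoiding the first seven makes $G$ a split permutation graph by Corollary~\ref{cor-split-perm-graph-basis}, so $G=G_\pi$ for some skew-merged $\pi$. If $\pi$ contained one of the twenty-two longer basis patterns $\beta$, then $\beta\le\pi$ would force $G_\beta$ — one of the seven graphs of Figure~\ref{fig-W-graphs-basis} — to be an induced subgraph of $G$, contradicting that $G$ avoids these too. Hence $\pi$ avoids all twenty-two longer patterns, and being skew-merged it avoids $2143$ and $3412$ as well, so $\pi$ avoids the entire basis of Corollary~\ref{cor-WcupWinv-basis}; thus $\pi\in\mathcal{W}\cup\mathcal{W}^{-1}$ and $G=G_\pi\in G_{\mathcal{W}}$. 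For minimality I would argue in two groups: for a new graph $G_\beta$, deleting any vertex yields $G_{\beta'}$ for a proper subpermutation $\beta'$ of $\beta$, and since $\beta$ is a \emph{minimal} forbidden permutation of $\mathcal{W}\cup\mathcal{W}^{-1}$ we have $\beta'\in\mathcal{W}\cup\mathcal{W}^{-1}$, whence $G_{\beta'}\in G_{\mathcal{W}}$; for the seven split obstructions the proper induced subgraphs have at most six vertices and are checked directly to lie in $G_{\mathcal{W}}$.

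The main obstacle is the forward direction, and specifically the many-to-one nature of $\pi\mapsto G_\pi$: showing that a graph is \emph{not} in $G_{\mathcal{W}}$ means excluding every permutation realizing it, not merely the distinguished pattern $\beta$. The observation that all such realizations are skew-merged bounds the fibres and reduces the matter to a finite verification, but pinning down the correspondence — matching the twenty-two patterns of Corollary~\ref{cor-WcupWinv-basis} to the seven graphs of Figure~\ref{fig-W-graphs-basis} and confirming that no stray realization slips back into $\mathcal{W}\cup\mathcal{W}^{-1}$ — is the genuinely laborious step, and is where I would rely on a computer search, consistent with the enumeration already invoked for Corollary~\ref{cor-WcupWinv-basis}.
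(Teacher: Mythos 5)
Your proposal is correct and follows essentially the same route as the paper's proof: reduce to split permutation graphs via Corollary~\ref{cor-split-perm-graph-basis}, note that every permutation realizing such a graph is skew-merged, and use a computer-verified correspondence between the longer basis patterns of Corollary~\ref{cor-WcupWinv-basis} and the seven graphs of Figure~\ref{fig-W-graphs-basis} (including all stray realizations) to transfer the basis from permutations to graphs. Your explicit minimality verification at the end is a small addition that the paper leaves implicit.
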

\begin{proof}
Corollary~\ref{cor-WcupWinv-basis} gives the minimal forbidden permutations of $\mathcal{W}\cup\mathcal{W}^{-1}$. Our first step is to compute the permutation graphs corresponding to these permutations, and then to compute all permutations corresponding to those permutation graphs. A brute-force computation yields the following chart. Observe that the permutations on the left-hand column of this chart are precisely the minimal forbidden permutations for $\mathcal{W}\cup\mathcal{W}^{-1}$. 
\[
	\begin{array}{ll}
	\beta&G_\beta\\[2pt]\hline\\[-8pt]
	2143		&	2K_2\\[2pt]
	3412		&	C_4\\[2pt]
	236145, 412563	&	H\\[2pt]
	365214, 541632	&	\overline{H}\\[2pt]
	234615, 261345, 314562, 512364	&	\text{cross}\\[2pt]
	265413, 463215, 516432, 543162	&	\text{co-cross}\\[2pt]
	236514, 362145, 431562, 512643	&	X_{168}\\[2pt]
	265134, 346215, 415632, 541263	&	\overline{X_{168}}\\[2pt]
	28536417, 71463582	&	X_{160}
	\end{array}
\]

Suppose that the graph $G$ does not contain any of the graphs listed in the statement of the proposition as induced subgraphs. By Corollary~\ref{cor-split-perm-graph-basis}, it follows that $G$ is a split permutation graph. Therefore $G\cong G_\pi$ for at least one permutation $\pi$ and moreover, every permutation $\pi$ such that $G_\pi\cong G$ is skew-merged. Arbitrarily choose some skew-merged permutation $\pi$ such that $G\cong G_\pi$. We know that if $\beta$ is contained in $\pi$ then $G_\beta$ is an induced subgraph of $G_\pi$. Since $G$ does not contain any of the graphs in the statement of the proposition as induced subgraphs, we see that $\pi$ avoids all of the minimal forbidden permutations listed in Corollary~\ref{cor-WcupWinv-basis}. Therefore $\pi\in\mathcal{W}\cup\mathcal{W}^{-1}$, so $G\in G_\mathcal{W}$, as desired.

For the other direction, consider an arbitrary permutation $\pi\in\mathcal{W}\cup\mathcal{W}^{-1}$. Because $\pi$ is skew-merged, Corollary~\ref{cor-split-perm-graph-basis} implies that $G_\pi$ does not contain $2K_2$, $C_4$, $C_5$, net, co-net, rising sun, or co-rising sun. Moreover, if $G_\pi$ were to contain $H$, $\overline{H}$, cross, co-cross, $X_{168}$, $\overline{X_{168}}$, or $X_{160}$ as induced subgraphs, then such an induced subgraph would be isomorphic to $G_\beta$ for some subpermutation $\beta$ of $\pi$. However, the chart above lists all permutations $\beta$ such that $G_\beta$ is isomorphic to one of these graphs, and every one of these permutations is a forbidden permutation for $\mathcal{W}\cup\mathcal{W}^{-1}$. Therefore we may conclude that every permutation graph in the class $G_\mathcal{W}=G_{\mathcal{W}\cup\mathcal{W}^{-1}}$ avoids all of the induced subgraphs listed in the statement of the proposition, completing the proof.
\end{proof}

\begin{figure}
\begin{footnotesize}
\begin{center}
  \begin{tikzpicture}[scale=0.2, baseline=(current bounding box.center)]
    \draw [thick, darkgray, ->] plot [smooth, tension=0.6] coordinates {(2,-2) (1,2) (-2,1) (-1,-5) (4,-4) (3,4) (-4,3) (-3,-7) (6,-6) (5,6) (-6,5) (-5,-9) (8,-8) (7,8) (-8,7) (-7,-11) (5,-12.5)};
    \plotpartialperm{2/-2, 1/2, -2/1, -1/-5, 4/-4, 3/4, -4/3, -3/-7, 6/-6, 5/6, -6/5, -5/-9, 8/-8, 7/8, -8/7, -7/-11}
	\node at (2,-2) [right] {$v_1$};
	\node at (1,2) [right] {$u_1$};
	\node at (-2,1) [left] {$v_2$};
	\node at (-1,-5) [left] {$u_2$};
	\node at (4,-4) [right] {$v_3$};
	\node at (3,4) [right] {$u_3$};
	\node at (-4,3) [left] {$v_4$};
	\node at (-3,-7) [left] {$u_4$};
	\node at (6,-6) [right] {$v_5$};
	\node at (5,6) [right] {$u_5$};
	\node at (-6,5) [left] {$v_6$};
	\node at (-5,-9) [left] {$u_6$};
	\node at (8,-8) [right] {$v_7$};
	\node at (7,8) [right] {$u_7$};
	\node at (-8,7) [left] {$v_8$};
	\node at (-7,-11) [left] {$u_8$};
  \end{tikzpicture}
\quad\quad\quad\quad
    \begin{tikzpicture}[xscale=1, yscale=2, baseline=(current bounding box.center)]
    \plotpartialperm{1/0,1/1,2/0,2/1,3/0,3/1,4/0,4/1,5/0,5/1,6/0,6/1,7/0,7/1,8/0,8/1};
    \foreach \x in {1,2,3,4,5,6} {
      \draw (\x,0) -- (\x,1);
      \pgfmathsetmacro\ystart{\x + 2};
      \foreach \y in {\ystart,...,8}
      \draw (\x,0) -- (\y,1);
    }
    \draw (8,0) -- (8,1);
    \draw (7,0) -- (7,1);
    \draw (1,1) -- (8,1);
    \node at (1,1) [below right] {$v_1$};
    \node at (2,1) [below right] {$v_2$};
    \node at (3,1) [below right] {$v_3$};
    \node at (8,1) [below right] {$v_8$};
    \node at (1,0) [below right] {$u_1$};
    \node at (2,0) [below right] {$u_2$};
    \node at (3,0) [below right] {$u_3$};
    \node at (8,0) [below right] {$u_8$};
    \foreach \x/\y in {1/3,1/4,1/5,1/6,2/4,2/5,2/6,3/5,3/6,4/6,1/7,2/7,3/7,4/7,5/7,1/8,2/8,3/8,4/8,5/8,6/8}
      \draw (\x,1) to [in=165, out=15] (\y,1);		
  \end{tikzpicture}
\end{center}
\end{footnotesize}
\caption{A widdershins spiral of standard orientation and its permutation graph.}
\label{fig-widdershins-graphs}
\end{figure}
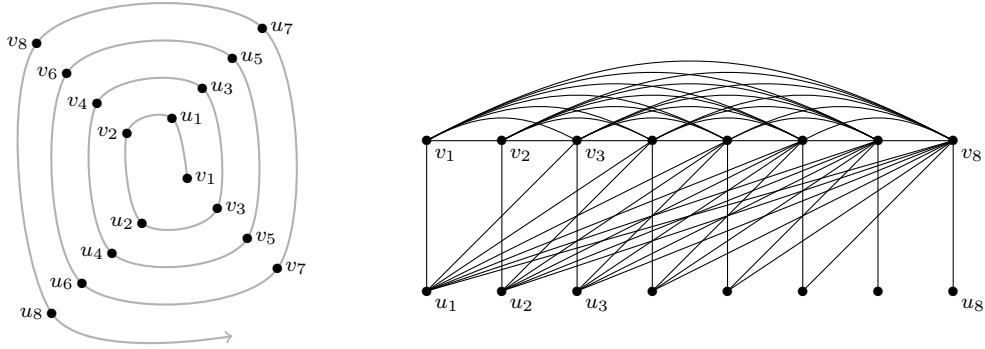

Having established that $G_\mathcal{W}$ is wqo under the induced subgraph relation (as a consequence of Proposition~\ref{prop-W-wqo}, which shows that the permutation class $\mathcal{W}$ is wqo under the permutation containment order) and that $G_\mathcal{W}$ is defined by finitely many minimal forbidden induced subgraphs (the $14$ graphs listed in Proposition~\ref{prop-basis-GW}), our counterexample to Conjecture~\ref{false-conj-lwqo} is completed with our final result, below. The proof we give is adapted from a related argument of Atminas, Brignall, Lozin, and Stacho~\cite{atminas:minimal-classes:}.

\begin{proposition}
\label{prop-GW-not-2wqo}
The class $G_\mathcal{W}$ is not $2$-wqo.
\end{proposition}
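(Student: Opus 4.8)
To show that $G_\mathcal{W}$ is not $2$-wqo, I must exhibit an infinite antichain of graphs from $G_\mathcal{W}$ labelled by a two-element antichain (say, black and white). The natural source of such an antichain is the widdershins spirals themselves: the permutation graphs of the standard-orientation spirals, depicted in Figure~\ref{fig-widdershins-graphs}, form an infinite family in $G_\mathcal{W}$, and each spiral of length $2n$ carries a natural pairing of its vertices into the ``top'' vertices $v_1,\dots,v_n$ and the ``bottom'' vertices $u_1,\dots,u_n$. The idea is to use a two-colouring that encodes enough structural information to rigidify each spiral graph, so that a label-respecting induced subgraph embedding between two spirals of different sizes is impossible.

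\emph{Construction of the antichain.} For each $n$ I would take the permutation graph $G_n$ of the widdershins spiral of standard orientation on $2n$ points, and colour its vertices according to the pattern visible in the spiral: for instance, colour all the $v_i$ (outer/clique-type) vertices one colour and all the $u_i$ (inner/independent-type) vertices the other, or use a more refined scheme if a single dichotomy is insufficient. The point is that the vertices $v_1,\dots,v_n$ induce a clique while $u_1,\dots,u_n$ induce an independent set (reading off the skew-merged structure), and the adjacencies between them form a ``staircase'' pattern: $v_i$ is adjacent to $u_j$ precisely for certain ranges of indices. I would let $\{(G_n,\ell_n)\}_{n\ge N}$ be the resulting labelled graphs for $n$ past some small threshold.

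\emph{Proving it is an antichain.} The heart of the argument is to show that for $m<n$ there is no label-preserving induced-subgraph embedding of $(G_m,\ell_m)$ into $(G_n,\ell_n)$. I would argue by analysing how the staircase adjacency pattern constrains any such embedding. Because the colours pin down which vertices can map to the clique side and which to the independent side, and because the induced-subgraph condition forces both adjacencies and non-adjacencies to be preserved, an embedding would have to send the entire staircase of $G_m$ into the staircase of $G_n$ in an order-compatible way; the key combinatorial claim is that the spiral adjacency structure is ``canonical'' (in the sense of Atminas, Brignall, Lozin, and Stacho~\cite{atminas:minimal-classes:}), so that the only embeddings are the identity-type inclusions of one full spiral into a larger one, and those fail to respect both the labels and the non-edges simultaneously. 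I would make this precise by locating, for any candidate embedding, a pair of vertices whose adjacency or non-adjacency is reversed, contradicting that the map is an induced embedding.

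\emph{Main obstacle.} The hard part will be the rigidity step: verifying that the labelling genuinely forbids all embeddings, not just the ``obvious'' ones. Permutation graphs are many-to-one images of permutations (recall $G_{2413}\cong G_{3142}$), so there is extra flexibility in how a spiral graph can sit inside another, and one must rule out embeddings that exploit the symmetries collapsing distinct permutations to the same graph. I expect to handle this by choosing the two-colouring carefully so that it breaks precisely these symmetries—pinning the orientation and the clique/independent-set roles—and then reducing the graph-embedding question to a statement about the underlying labelled permutations or their staircase incidence structure, where the canonical antichain of~\cite{atminas:minimal-classes:} can be invoked directly. Adapting that canonical-antichain argument to the present spiral labelling, rather than re-deriving it from scratch, is what makes the proof short.
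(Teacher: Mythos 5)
There is a genuine gap, and it sits exactly where you hedge. Your primary proposed colouring --- all clique vertices $v_1,\dots,v_n$ one colour and all independent-set vertices $u_1,\dots,u_n$ the other --- does not yield an antichain. For $m<n$ the widdershins spiral of length $2m$ is an initial segment of the spiral of length $2n$, and the corresponding inclusion of permutation graphs sends $u_i\mapsto u_i$ and $v_i\mapsto v_i$; this is a label-preserving induced-subgraph embedding of $W_m$ into $W_n$ under your colouring, since it carries clique vertices to clique vertices and independent vertices to independent vertices. (Recall that $G_\mathcal{W}$ is wqo unlabelled, so the unlabelled $W_k$ form a chain rather than an antichain; the entire burden is on the labelling, and any labelling invariant under these natural inclusions cannot work.) The ``more refined scheme'' you allude to is the actual content of the proof: the paper colours exactly two vertices white, namely $u_1$ and $v_k$ --- one vertex at each \emph{end} of the spiral --- and leaves all others black. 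An embedding would then have to match both marked ends simultaneously, which fails when the spirals have different lengths. Your claim that the identity-type inclusions ``fail to respect both the labels and the non-edges simultaneously'' is false for your colouring; it is only true for an end-marking colouring.

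The second gap is the rigidity step, which you defer to the canonical antichain of Atminas, Brignall, Lozin, and Stacho rather than prove. The paper's argument is self-contained: the only induced copies of $P_4$ in $W_k$ are on the vertex sets $\{u_i,v_i,u_{i+1},v_{i+1}\}$; exactly one of these contains a white leaf (at the $u_1$ end) and exactly one contains a white non-leaf (at the $v_k$ end); between them lies a chain of $k-3$ all-black copies of $P_4$, consecutive copies overlapping in two vertices. Any labelled embedding of $W_k$ into $W_\ell$ must carry this chain into the corresponding chain of $W_\ell$, and since the chains have different lengths the two white-containing copies of $P_4$ cannot both be placed correctly. Without identifying this rigid local structure (or something equivalent) and without the end-marking colouring, your sketch has no mechanism for producing the contradiction.
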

\begin{proof}
Let $W_k$ denote the permutation graph of the widdershins spiral of length $2k\ge 8$ and standard orientation. Label the vertices of $W_k$ as $\{u_1,\dots,u_k,v_1,\dots,v_k\}$ as on the left of Figure~\ref{fig-widdershins-graphs}. As shown on the right of this figure, the vertices $\{u_1,\dots,u_k\}$ form an independent set in $W_k$, the vertices $\{v_1,\dots,v_k\}$ form a clique in $W_k$, and we have $u_i\sim v_j$ if and only if $i=j$ or $i\le j-2$. Next we color the vertices $u_1$ and $v_k$ white and the other vertices black, where white and black are a $2$-element antichain, as shown in Figure~\ref{fig-2wqo-antichain}.

\begin{figure}
\begin{center}
	\begin{tikzpicture}[xscale=0.8, yscale=1.2]	
		\plotpartialperm{1/0, 1/1, 2/0, 2/1,3/0, 3/1,4/0, 4/1};
		\foreach \x in {1,2} {
			\draw (\x,0) -- (\x,1);
		  	\pgfmathsetmacro\ystart{\x + 2};
			\foreach \y in {\ystart,...,4}
				\draw (\x,0) -- (\y,1);
		}
		\draw (4,0) -- (4,1);
		\draw (3,0) -- (3,1);
		\draw (1,1) -- (4,1);
		\foreach \x/\y in {1/3,1/4,2/4}
			\draw (\x,1) to [in=165, out=15] (\y,1);		
		\absdothollow{(1,0)};
		\absdothollow{(4,1)};
	\end{tikzpicture}
\quad\quad\quad\quad
	\begin{tikzpicture}[xscale=0.8, yscale=1.2]
		\plotpartialperm{1/0, 1/1, 2/0, 2/1,3/0, 3/1,4/0, 4/1,5/0, 5/1};
		\foreach \x in {1,2,3} {
			\draw (\x,0) -- (\x,1);
		  	\pgfmathsetmacro\ystart{\x + 2};
			\foreach \y in {\ystart,...,5}
				\draw (\x,0) -- (\y,1);
		}
		\draw (4,0) -- (4,1);
		\draw (5,0) -- (5,1);
		\draw (1,1) -- (5,1);
		\foreach \x/\y in {1/3,1/4,1/5,2/4,2/5,3/5}
			\draw (\x,1) to [in=165, out=15] (\y,1);		
		\absdothollow{(1,0)};
		\absdothollow{(5,1)};
	\end{tikzpicture}
\quad\quad\quad\quad
	\begin{tikzpicture}[xscale=0.8, yscale=1.2]
		\plotpartialperm{1/0, 1/1, 2/0, 2/1,3/0, 3/1,4/0, 4/1,5/0, 5/1,6/0,6/1};
		\foreach \x in {1,2,3,4} {
			\draw (\x,0) -- (\x,1);
		  	\pgfmathsetmacro\ystart{\x + 2};
			\foreach \y in {\ystart,...,6}
				\draw (\x,0) -- (\y,1);
		}
		\draw (6,0) -- (6,1);
		\draw (5,0) -- (5,1);
		\draw (1,1) -- (6,1);
		\foreach \x/\y in {1/3,1/4,1/5,1/6,2/4,2/5,2/6,3/5,3/6,4/6}
			\draw (\x,1) to [in=165, out=15] (\y,1);		
		\absdothollow{(1,0)};
		\absdothollow{(6,1)};
	\end{tikzpicture}
\end{center}
\caption{The three elements $W_4$, $W_5$, and $W_6$ of the labelled antichain in the proof of Proposition~\ref{prop-GW-not-2wqo}.}
\label{fig-2wqo-antichain}
\end{figure}
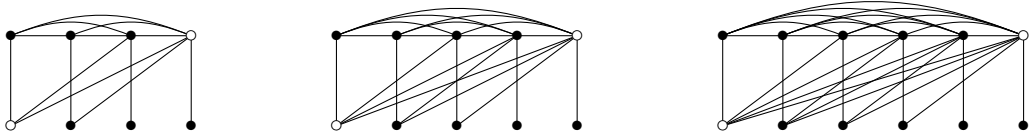
We claim that the set $\{W_k\st k\ge 4\}$ is an infinite antichain of labelled graphs. Suppose to the contrary that some $W_k$ were to embed as a labelled induced subgraph in $W_\ell$ for $\ell>k$. The only copies of $P_4$ in any $W_k$ occur as a set of vertices of the form $\{u_i,v_i,u_{i+1},v_{i+1}\}$. Specifically, there is only one copy of $P_4$ which has a white leaf, namely the graph on the vertices $\{u_1,v_1,u_2,v_2\}$. Similarly, the only copy of $P_4$ in $W_k$ with a white non-leaf is on the vertices $\{u_{k-1},v_{k-1},u_k,v_k\}$. Between these, there is a sequence of $k-3$ black copies of $P_4$, each sharing exactly two vertices with its predecessor copy of $P_4$, and two vertices with its successor copy. Any embedding $W_k$ into $W_\ell$ would have to preserve this sequence of copies of $P_4$, but this is impossible as then we cannot also map both of the copies of $P_4$ with white vertices to their respective positions. This contradiction completes the proof of the proposition.
\end{proof}

\bigskip

\minisec{Acknowledgements}
We thank Jay Pantone for performing the computer search that established Corollary~\ref{cor-WcupWinv-basis}. The computation was performed using the {\tt PermPy} package developed by Homberger and Pantone~\cite{PermPy1.0}.

\bibliographystyle{acm}
\bibliography{../../refs}

\end{document}